\documentclass[hidelinks,nohypdvips]{siamart}

\usepackage{amssymb,graphics,amsmath,amsopn,amstext,amsfonts,color,fancybox,hyperref,bm}
\usepackage[toc,page]{appendix}
\usepackage{longtable}
\usepackage{lipsum}
\usepackage{subfig,placeins} 
\usepackage{lipsum}
\usepackage{amsfonts}
\usepackage{epstopdf}
\usepackage{graphicx}
\usepackage{verbatim}
\usepackage{algorithmic}
\usepackage{multirow}
\usepackage{booktabs}
\usepackage{tikz}
\usetikzlibrary{patterns, arrows.meta}


\newcommand{\epc}{\hspace{1pc}}

\newcommand{\E}{{\rm I\!E}}

\newcommand{\II}{{\rm I\!I}}

\newcommand{\wh}{\widehat}

\newcommand{\IE}{{\rm I}\!{\rm E}}
\newcommand{\IP}{{\rm I}\!{\rm P}}
\newcommand{\argmin}{\operatornamewithlimits{\operatornamewithlimits{\mbox{argmin}}}}
\newcommand{\gap}{\vspace{0.1in}}


\def\begar{$$\begin{array}{lll}}
\def\endar{\end{array}$$}
\def\begarlab{\begin{equation} \begin{array}{lll} \label}
\def\endarlab{\end{array} \end{equation}}
\def\argmax{\text{argmax}}
\def\argmin{\text{argmin}}

\def\ds1{{\mathrm{1 \hspace{-2.6pt} I}}}

\def\calA{{\cal A}}
\def\calB{{\cal B}}

\def\calC{{\cal C}}

\def\calD{{\cal D}}

\def\calF{{\cal F}}

\def\calH{{\cal H}}

\def\calM{{\cal M}}
\def\calN{{\cal N}}

\def\calU{{\cal U}}

\def\calX{{\cal X}}

\def\calZ{{\cal Z}}

\def\bg1{{g_1}}


\def\Z1N{Z_1^N}

\def\X1N{X_1^{N}}

\def\l0{{\lambda_0'}}



\def\sign{\text{sign}}





\newtheorem{example}[theorem]{Example}
\newtheorem{remark}[theorem]{Remark}

\DeclareSymbolFont{mathx}{U}{mathx}{m}{n}
\DeclareMathAccent{\widecheck}{\mathalpha}{mathx}{"71}

\title{
Estimation of Individualized Decision Rules Based on \\
an Optimized Covariate-Dependent Equivalent of Random Outcomes}

\author{Zhengling Qi\thanks{Department of Decision Sciences,
The George Washington University, Washington DC 20052.
{\tt Email: qizhengling@gwu.edu}.
} \and Ying Cui\thanks{The Daniel J.\ Epstein
Department of Industrial and
Systems Engineering, University of Southern California, Los Angeles, CA 90089.
{\tt Emails: yingcui@usc.edu; jongship@usc.edu}.  The work of these two authors was based on research partially
supported by the U.S.\ National Science Foundation grant IIS--1632971.} \and
Yufeng Liu\thanks{Department of Statistics and Operations Research, University of North Carolina, Chapel Hill, NC
	27599.
	{\tt Email:  yfliu@email.unc.edu}. The work of the first and third author was based on research partially supported by the U.S.\ National Science Foundation grant IIS-1632951
	and National Institute of Health grant R01GM126550.} \and Jong-Shi Pang\footnotemark[2]}

\begin{document}
\maketitle

\begin{abstract}
Recent exploration of optimal individualized decision rules (IDRs) for patients in precision medicine has attracted a lot of attention
due to the heterogeneous responses of patients to different treatments.  In the existing literature of precision medicine, an
optimal IDR is defined as a decision function mapping from the patients' covariate space into the treatment space that maximizes
the expected outcome of each individual.  Motivated by the concept of Optimized Certainty Equivalent (OCE) introduced originally
in \cite{ben1986expected} that includes the popular conditional-value-of risk (CVaR) \cite{rockafellar2000optimization},
we propose a decision-rule based optimized covariates dependent equivalent (CDE) for individualized decision making problems.
Our proposed IDR-CDE broadens the existing expected-mean outcome framework in precision medicine and enriches the previous concept
of the OCE.  Under a functional margin description of the decision rule modeled by an indicator function as in the literature of
large-margin classifiers, we study the mathematical problem of estimating an optimal IDRs in two cases: in one case, an optimal solution
can be obtained ``explicitly'' that involves the implicit evaluation of an OCE; the other case requires the numerical solution of an
empirical minimization problem obtained by sampling the underlying distributions of the random variables involved.  A major challenge
of the latter optimization problem is that it involves a discontinuous objective function.  We show that, under a mild condition at the
population level of the model, the epigraphical formulation of this empirical optimization problem is a piecewise affine, thus
difference-of-convex (dc), constrained dc, thus nonconvex, program.  A simplified dc algorithm is employed to solve the resulting dc
program whose convergence to a new kind of stationary solutions is established.
Numerical experiments demonstrate that our overall approach outperforms existing methods in estimating optimal IDRs under
heavy-tail distributions of the data. In addition to providing a risk-based approach for individualized medical treatments,
which is new in the area of precision medicine,
the main contributions of this work in general include: the broadening of the concept of the OCE, the epigraphical description
of the empirical IDR-CDE minimization problem and its equivalent dc formulation, and the optimization of resulting piecewise affine
constrained dc program.

\end{abstract}

\begin{keywords}
Precision medicine, individualized decision making, conditional value-at-risk, optimized covariate dependent equivalent,
dc programming for discontinuous optimization
\end{keywords}

\begin{AMS}
62P10,	65K05, 90C26
\end{AMS}


\section{Introduction}
Most medical treatments are designed for ``average  patients". Due to the patients' heterogeneity, ``one size fits all" medical treatment strategies
can be very effective for some patients but not for others.  For example, a study of colon cancer \cite{tan2012kras} found that patients with
a surface protein called KRAS are more likely to respond to certain antibody treatments than those without the protein.
Thus exploration of precision medicine has recently gained a significant attention in scientific research.  Precision medicine is a medical
model that provides tailored health care for each specific patient, which has already demonstrated its success in saving
lives \cite{bissonnette2012infectious, kummar2015application}.  One of the main goals in precision medicine, from the data analytic perspective,
is to estimate the optimal individualized decision rules (IDRs) that can improve the outcome of each individual.

\subsection{Estimating optimal IDRs: the expected-outcome 
approach}
\label{sec: Existing Literature in Estimating Optimal IDRs}
An IDR is a decision rule that recommends treatments/actions to patients based on the information of their covariates.
Consider the data collected from a single-stage randomized clinical trial involving different treatments.
Before the trial, a patient's information $X$, such as blood pressure and past medicine history,  is recorded.
The enrolled patient will be randomly assigned to take a  treatment denoted by $A$. After the patient receiving the
treatment/action, the outcome $\calZ$ of the patient can be observed. Without loss of generality, we may assume that the larger
$\calZ$ indicates the better condition a patient is in.

Let $\IP$ be the probability distribution of the triplet $Y$ of random variables {\small$(X, A, \calZ)$} and
let $\IE$ be the associated expectation operator,
where $X$ is a random vector defined on the covariates space $\calX\subseteq \mathbb{R}^p$,
$A$ is a random variable defined on the finite treatment set $\calA$ and $\calZ $ is a scalar random variable
representing outcome.  The likelihood of $(X, A, \calZ)$ under $\IP$ is defined as $f_0(x)\,\pi(a \,|\, x)\,f_1(z \,|\, x, a)$,
where $f_0(x)$ is the probability density of $X$, $\pi(a \,|\, x)$ is the probability of patients being assigned treatment $a$
given $X = x$ and $f_1(z \,|\,  x, a)$ is the conditional probability density of $\calZ$ given covariates $X = x$ and treatment
$A = a$.  For the clinical trial study, the value of $\pi(a \,|\, x)$ is known; for the observational study, this value can be
estimated via various methods such as multinomial logistic regression. 

An IDR $d$ is defined as a mapping from the covariate space $\calX$ into the action space $\calA$.
We let $\calD$ be the class of all measurable functions mapping from $\calX$ into $\calA$;
that is, ${\cal D}$ is the class of all measurable IDRs.
For any IDR $d \in {\cal D}$, define $\IP^{\,d}$ to be the probability distribution under which treatment $A$ is decided by $d$.
Then the corresponding likelihood function under $\IP^{\,d}$ is $f_0(x)\,\II(a = d(x))\,f_1(z\,|\, x, a)$, where the
indicator function $\II(a = d(x))$ equals to $1$ if $a = d(x)$ and $0$ otherwise.  Note that this is a discontinuous
step function.  The expected-value function \cite{qian2011performance} based on $\IP^{\,d}$ is given as
$\E^{\,d\,}[\,\calZ \,]$,
which can be interpreted as the expected outcome under IDR $d$.  It is known that if $\pi(a \,|\, X) \geq a_0 > 0$
almost surely (a.s.) for any $a \in \calA$ and some constant $a_0$, then $\IP^{\,d}$ is absolutely continuous
with respect to $\IP$~\cite{qian2011performance}.  Thus by the Radon-Nikodym theorem,
\begin{equation}\label{value fun}
\E^{\,d\,}[\,\calZ \,] \, =\,  \E \left[\,\calZ\, \frac{\text{d} \IP^{\,d\,}}{\text{d} \IP}\,\right] \, = \, \E\left[\,\frac{\calZ \, \II(A = d(X))}{\pi(A | X)}\,\right].
\end{equation}
In particular,  $\E^{\,d\,}[\,c(X) \,] = \E[\,c(X) \,] $ for any integrable function $c$ of the covariate $X$ \cite{qian2011performance}.
Given the triplet $(X, A, \calZ)$, an optimal IDR under the expected-value function framework is defined as
\[
d_0 \, \in \, \underset{d \in \calD}{\argmax} \ \E^{\,d\,}[\,\calZ \,].
\]
This is the expected-value function
maximization approach to the problem of estimating an optimal IDR to date.  This approach
can be roughly categorized into two main types: model-based and classification-based methods.
One of the representative methods for the former approach is  Q-learning, which models the conditional mean of the outcome $\calZ$
given $X$ and $A$.  The treatment was then searched to yield  the largest conditional mean of
outcome \cite{watkins1992q,murphy2005generalization,qian2011performance,schulte2014q}.  Alternatively, the classification-based method,
which was first proposed in \cite{zhao2012estimating}, transforms the problem of maximizing $\E^{\,d\,}[\,\calZ \,]$ into minimizing
a weighted 0--1 loss.  Based on this transformation, various classification methods can be used to estimate the optimal
IDR \cite{laber2015tree,liu2016robust,zhou2017residual}.

Only maximizing the average of outcome under IDR $d$ may be restrictive in  precision medicine.  For example, when evaluating
several treatments' effects on patients, doctors may want to know which treatment does the best to improve the outcome of a
higher-risk patient.  More importantly, due to the complex decision-making procedure in precision medicine, an ``optimal" IDR that
only maximizes the expected outcome of patients may lead to potentially adverse consequences for some patients.  Therefore, considering
individualized risk exposure is essential in precision medicine.  This motivates us to examine the problem of determining optimal IDRs
under a broader concept to control the individualized risk of each patient.

\subsection{Optimized certainty equivalent}
\label{sec: Optimized Certainty Equivalent}
Estimating optimal IDRs can be regarded as an individualized decision-making problem.
Utility functions have played an important role in such problems since they characterize the preference order
over random variables, based on which decisions can be made.  Guarding against the hazard of adverse decisions, risk
measures are needed to balance the sole maximization of such utilities.  This bi-objective consideration is well
appreciated in portfolio management, leading to many risk measures since the early days of the mean-variance approach
in \cite{markowitz1952portfolio}.
We refer the readers to \cite{rockafellar2013fundamental} and references therein for a contemporary perspective of diverse
risk measures. 
Among such measures used in investment and economics, one of the most popular is the conditional-value-at-risk (CVaR) that has been extensively discussed
in \cite{rockafellar2000optimization,rockafellar2002conditional}; see the recent survey in \cite{SarykalinUryasev2008survey}.  In general,
for an essentially
bounded random variable ${\cal Z}$ with the property that there exists a large enough scalar $B > 0$ such that
the set $\left\{ \, \omega \in \Omega \, \mid \, | \, {\cal Z}(\omega) \, | \, > \, B \right\}$ has measure zero,
where $\Omega$ is the sample space on which the random variable ${\cal Z}$ is defined, the $\gamma$-CVaR of ${\cal Z}$
is by definition:
\[
\text{CVaR}_{\,\gamma\,}({\cal Z}) \, \triangleq \, \displaystyle{
	\sup_{\eta \in \mathbb{R}}
} \, \left[ \, \eta - \frac{1}{\gamma}\, \E\,  (\eta -{\cal Z})_+ \, \right],
\]
with $\gamma \in (0, 1)$ and $t_+ \triangleq \max(t,0)$ for a scalar (or vector) $t$.
The smallest maximizer of $\text{CVaR}_\gamma({\cal Z})$ is the $\gamma$-quantile of $\calZ$,
which is also known as the value-at-risk (VaR).  It turns out that the CVaR is a special case of an {\bf Optimized Certainty Equivalent} (OCE)
proposed in \cite{ben1986expected,ben1987penalty,ben2007old} that provides a link between utility and risk measures.
In fact, the introduction of the OCE predates the popularity of the CVaR in portfolio management.

Let ${\cal U}$ denote the family of utility functions $u : \mathbb{R} \to [ \, -\infty, \, \infty \, )$ that are
upper semi-continuous, 
concave, and non-decreasing with a nonempty effective domain
\[
\mbox{dom}(u) \, \triangleq \, \left\{ \, t \in \mathbb{R} \mid u(t) > -\infty \, \right\} \, \neq \, \emptyset
\]
such that $u(0) = 0$ and $1 \in \partial u(0)$, where $\partial u$ denotes the subdifferential map of $u$.  Thus in particular,
\[ 
\left[ \, u(t) \, \geq \, 0, \ \forall \, t \, \geq \, 0 \, \right] \epc \mbox{and} \epc 
\left[ \, u(t) \, \leq \, t, \ \forall \, t \, \in \, \mathbb{R} \, \right].
\]
The OCE of an essentially bounded random variable ${\cal Z}$ is by definition:
\[
{\cal O}_u({\cal Z}) \, \triangleq \, \displaystyle{
\sup_{\eta \in \mathbb{R}}
} \, \left[ \, \eta + \E\, u( {\cal Z} - \eta ) \, \right].
\]
According to the above cited references, the scalar $\eta$ is interpreted as the present consumption among the
uncertain future income ${\cal Z}$. Then  the sum $\eta + \E\, u( {\cal Z} - \eta )$ is the utility-based present
value of ${\cal Z}$. Thus the goal of the OCE is to maximize the latter value by choosing an optimal allocation
of ${\cal Z}$ between present and future consumption.  A particular interest of the OCE is the case where
$u(t) = \xi_1 \, \max(0,t) - \xi_2 \, \max(0,-t)$ for some constants
$\xi_1$ and $\xi_2$ satisfying $0 \leq \xi_1 \leq 1 \leq \xi_2$.  In this case, a maximizer of ${\cal O}_u(\calZ)$
corresponds to a quantile of the random variable $\calZ$. For $\xi_1 = 0$, ${\cal O}_u(\calZ)$ reduces to the CVaR. With a proper truncation,
a concave quadratic utility function can also satisfy the non-decreasing property, resulting in a mean-variance combination;
see \cite[Example~2.2]{ben2007old}. One special property of OCE is that $-{\cal O}_u(\calZ)$ gives a convex risk measure \cite[Section~2.2]{ben2007old}.
One of the limitations of the OCE, when applied to our problem of estimating optimal IDRs, is that it does not take into account covariates for the choice of an optimal allocation between present and
future consumption when data on the covariates are available.

In this paper, motivated by applications in the field of precision medicine, we {\bf Individualize} the known concept of the OCE
to a {\bf Decision-Rule based Optimized Covariate-Dependent Equivalent} (IDR-CDE)  that also incorporates domain covariates.
The new equivalent not only broadens the traditional expectation--only based  criterion in the estimation of the optimal IDRs
in precision medicine, but also enriches the combined concept of utility and risk measures and bring them to individual-based
decision making.  The proposed IDR-CDE is very flexible so that different utility functions will produce different optimal IDRs
for various purposes.  It turns out that estimating optimal IDRs under the IDR-CDE is a challenging optimization problem
since it involves the discontinuous function $\II(A = d(X))$.  A major contribution of our work is that we overcome this technical
difficulty by reformulating the estimation problem as a difference-of-convex (dc) constrained dc program under a mild assumption
at the population level of the model.  This reformulation allows us to employ a dc algorithm for solving the resulting dc program.
Numerical results under the settings of binary actions and linear decision rules are presented to demonstrate the performance
of our proposed model and algorithm.

\subsection{Contributions and organization}

The contributions of our paper are in two directions: modeling and optimization. In the area of modeling, we extend the
expected-value maximization approach in precision medicine to a more general framework by incorporating risk;
see Section~\ref{sec:main}.  This is accomplished through the extension of the OCE to the IDR-CDE in which we incorporate domain
covariates and individualized decision rules.  Properties of the IDR-CDE are derived in Subsection~\ref{subsec:properties}.
The optimal IDR problem under the IDR-CDE criterion is formally defined in Subsection~\ref{subsec:the IDE opt}.
Two cases of this problem are considered:
the decomposable case (Subsection~\ref{subsec:decomposable}) and the general case via empirical maximization.
Examples of the IDR-CDE given in Subsection \ref{subsec:examples} conclude the modeling part of the paper.  Beginning in Section~\ref{sec:empirical},
the solution of the empirical IDR-CDE maximization is the other major topic of our work.  The challenge of this problem is the presence of
the discontinuous indicator function in the objective function. The cornerstone of our treatment of this problem is its epigraphical formulation
which is valid under a mild assumption at the model's population level.  We next introduce a piecewise affine description of the epigraphical constraints from which
we obtain a difference-of-convex constrained optimization problem to be solved; see Sections \ref{sec:alg} and \ref{sec:dca}.
Although restricted to the empirical IDR-CDE maximization
problem, we believe that our novel dc constrained programming treatment of the discontinuous optimization problem on hand can potentially be generalized
to the composite optimization of univariate step functions with affine functions.
In Section~\ref{sec:Numerical}, we demonstrate the effectiveness of our proposed IDR-CDE optimization over the expected-value maximization
via numerical results.

\section{The IDR-based CDE}\label{sec:main}

In this section, we extend the OCE along two directions.
The first extension is to take the expectation $\E^{\,d\,}$ with respect to decision-rule based probability distribution $\IP^{\, d}$
in order to evaluate the outcome under the IDR $d$.
The second extension is to allow the deterministic scalar $\eta$ over which the supremum in the OCE is taken
to be a family of measurable functions $\calF$ defined on the covariate space $\calX$.  This family $\calF$  allows the incorporation of
available data representing covariate information for prediction and risk reduction; see the inequality \eqref{ineq:OCE} below.
For notational purpose, we let ${\cal L}^{\,r}(\calX, \Xi, \IP_X) $ be the class of all measurable functions $f$ such that
$\int \, |\, f(X) \, |^r \, d \, \IP_X \,< \,\infty$ with $r \in [1, \infty]$. Here $(\calX, \Xi, \IP_X)$ is the measure space
with $\Xi$ being the $\sigma$-algebra generated by $\calX$, and $\IP_X$ being the corresponding marginal probability measure
of $X$.  

\subsection{Definition and properties} \label{subsec:properties}
For an essentially bounded random variable ${\cal Z}$, 
the {\sl individualized decision-rule based optimized covariate-dependent equivalent} (IDR-CDE)
of ${\cal Z}$ under decision rule $d$ with respect to a utility function $u \in {\cal U}$ and a linear
space ${\cal F} \subseteq {\cal L}^{\,1}({\cal X}, \Xi, \IP_{X})$ is
\[ \begin{array}{lll}
{\cal O}_{(u, \calF)}^{\, d}({\cal Z}) & \triangleq & \displaystyle{
	\sup_{\alpha \in {\calF}}
} \, \left[ \, \E \,\alpha(X) + \E^{\,d\,} u( {\cal Z} - \alpha(X) ) \, \right] \\ [0.2in]
& = & \displaystyle{
	\sup_{\alpha \in {\calF}}
} \, \left[ \, \E \,\alpha(X) + \E\left( u( {\cal Z} - \alpha(X) )  \, \displaystyle{
	\frac{\II( A = d(X) )}{\pi(A | X)}
} \, \right) \, \right]  \\ [0.25in]
& = & \displaystyle{
	\sup_{\alpha \in {\calF}}
} \, \E\left[ \, \left[ \, \alpha(X) + u( {\cal Z} - \alpha(X) ) \, \right] \, \displaystyle{
	\frac{\II( A = d(X) )}{\pi(A | X)}
} \, \right],
\end{array} \]
where the last equality holds because of {$\E[\alpha(X)] = \E^d[\alpha(X)]$ and the change of measure}.
The space $\calF$ is taken to contain all constant functions and such that the expectations in
${\cal O}_{(u, \calF)}^{\, d}({\cal Z})$ are taken over integrable functions. One example of such a space is a
family of all bounded measurable functions.
We will specify $\calF$ for different utility functions in later discussion. The following proposition gives two preliminary properties
of the IDR-CDE.  In particular, the inequality (\ref{ineq:OCE}) bounds the IDR-CDE ${\cal O}_{(u, \calF)}^{\, d}({\cal Z})$ of the
random variable ${\cal Z}$ in terms of the OCE of ${\cal Z}$ in two ways: one is an upper bound in terms of the expected OCE of
${\cal Z}$ conditional on $X$ and $A = d(X)$, and the other one is a lower bound in terms of the decision-rule based OCE of ${\cal Z}$.
A notable mention of both bounds is that they are independent of the family ${\cal F}$; see (\ref{ineq:OCE}).

\begin{proposition}\label{prop: prem}
The following two statements hold.

\noindent (a) For any $u \in \calU$, one has ${\cal O}_{(u, \calF)}^{\, d}(0) = 0$.

\noindent (b) For any linear space $\calF$ containing all constant functions and
for which ${\cal O}_{(u, \calF)}^{\, d}({\cal Z})$ is finite,
\begin{equation}\label{ineq:OCE}
\E\,[\,{\cal O}_u({\cal Z} | X, A = d(X))\,] \, \geq \, {\cal O}_{(u, \calF)}^{\, d}({\cal Z}) \, \geq\,
\displaystyle{
\sup_{\eta\in\mathbb{R}}
} \, \E^{\, d}\left[ \, \eta + u( {\cal Z} - \eta ) \, \right].
\end{equation}
\end{proposition}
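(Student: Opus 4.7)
My plan is to handle the two statements separately, with part (a) reducing to the definitional properties of $u \in \calU$ and part (b) decomposing into a trivial lower bound and a conditioning argument for the upper bound.

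For part (a), I would first establish $\calO_{(u,\calF)}^{\,d}(0) \geq 0$ by evaluating the defining supremum at the constant function $\alpha \equiv 0$, which lies in $\calF$ by assumption; since $u(0) = 0$, this candidate contributes $\E\,0 + \E^{\,d}u(0) = 0$. For the reverse inequality, I would use the property $u(t) \leq t$ for all $t \in \mathbb{R}$ recorded in the excerpt to get $u(-\alpha(X)) \leq -\alpha(X)$ pointwise, so that
\[
\E\,\alpha(X) + \E^{\,d}\,u(-\alpha(X)) \,\leq\, \E\,\alpha(X) - \E^{\,d}\alpha(X) \,=\, 0,
\]
where the final equality uses the identity $\E\,\alpha(X) = \E^{\,d}\alpha(X)$ for integrable functions of the covariate $X$ noted after \eqref{value fun}. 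Taking the supremum over $\alpha \in \calF$ then yields $\calO_{(u,\calF)}^{\,d}(0) \leq 0$.

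For the lower bound in part (b), I would simply restrict the supremum over $\calF$ to the subset of constant functions, which is contained in $\calF$ by assumption. Each constant $\alpha \equiv \eta$ contributes $\eta + \E^{\,d}u(\calZ - \eta)$ to the functional, and the supremum of this quantity over $\eta \in \mathbb{R}$ is exactly the right-hand side of \eqref{ineq:OCE}.

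For the upper bound in part (b), the key step is a conditional-expectation identity. Using the change-of-measure form of ${\cal O}_{(u,\calF)}^{\,d}(\calZ)$ and the tower property conditioning on $X$, I would write, for an arbitrary $\alpha \in \calF$,
\[
\E\,\alpha(X) + \E^{\,d}\,u(\calZ - \alpha(X)) \,=\, \E\bigl[\,\alpha(X) + \E\,[\,u(\calZ - \alpha(X)) \,\vert\, X,\, A = d(X)\,]\,\bigr],
\]
where the inner conditional expectation is well-defined since $\pi(A\,\vert\, X) \geq a_0 > 0$ a.s. For each fixed realization of $X$, the bracketed quantity evaluated at $\eta := \alpha(X)$ is bounded above by the pointwise supremum over $\eta \in \mathbb{R}$, which is exactly $\calO_u(\calZ \,\vert\, X, A = d(X))$ by definition of the OCE. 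Taking expectation over $X$ and then supremum over $\alpha \in \calF$ on the left-hand side gives the desired upper bound.

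The only subtle point I anticipate is ensuring measurability of the map $x \mapsto \calO_u(\calZ \,\vert\, X = x, A = d(x))$ so that its expectation is well-defined; this is standard because the map $\eta \mapsto \eta + \E[u(\calZ - \eta)\,\vert\, X = x, A = d(x)]$ is concave and continuous, so the supremum can be computed over a countable dense subset of $\mathbb{R}$, yielding a measurable function of $x$. Beyond this bookkeeping, the proof is essentially a direct manipulation of the definitions using $u(0) = 0$, $u(t) \leq t$, and the identity $\E = \E^{\,d}$ on functions of $X$ alone.
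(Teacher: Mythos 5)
Your proposal is correct and follows essentially the same route as the paper's own proof: the same two-sided argument for part (a) using $u(0)=0$, $u(t)\leq t$, and $\E\,\alpha(X)=\E^{\,d}\alpha(X)$; restriction to constant functions for the lower bound in (b); and the tower-property conditioning on $X$ and $A=d(X)$ followed by the pointwise bound $\alpha(X)+\E[u(\calZ-\alpha(X))\mid X, A=d(X)]\leq \sup_{s\in\mathbb{R}}\{s+\E[u(\calZ-s)\mid X, A=d(X)]\}$ for the upper bound. Your closing remark on measurability of $x\mapsto\calO_u(\calZ\mid X=x, A=d(x))$ is a small piece of bookkeeping the paper leaves implicit, but it does not alter the argument.
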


\begin{proof}
(a) Since $u \in \calU$, one has $u(t) \leq t$ and then
\[
{\cal O}_{(u, \calF)}^{\,d}(0) \, \leq\, \sup_{ \alpha \in \calF} \left\{\,\E\,[\,\alpha(X)\,] + \E^{\,d\,}[\,0 - \alpha(X)\,]\right\}
\, =  \, 0,
\]
where the last equality holds since $\E^{\,d\,}(\alpha(X)) = \E\left[\alpha(X)\right]$. Meanwhile, $u(0) = 0$ leads to
\[
{\cal O}_{(u, \calF)}^{\,d}(0) \, \geq\, \E\,[\,0\,] + \E^{\,d\,}[\,0 - 0\,] \, =  \, 0,
\]
since $0 \in \calF$.  Combining the two inequalities gives the statement that ${\cal O}_{(u, \calF)}^{\,d}(0) = 0$.

\gap

(b) We can write 
\begin{equation*}
\begin{aligned}
{\cal O}_{(u, \calF)}^d(\calZ) & = \,\sup_{ \alpha \in \calF} \left\{\,\E\left[\;\sum_{a \in \calA}\, \II(d(X) = a) \,\E\,
\left[\,\alpha(X) + u(\calZ - \alpha(X)) \mid X, A = a\,\right] \,\right] \,\right\}\\[0.1in]
& =\, \sup_{ \alpha \in \calF} \left\{\, \E\,[\,\E\,[\,\alpha(X) + u(\calZ - \alpha(X)) \mid X, A = d(X)\,]\,] \,\right\}\\[0.05in]
& =\, \sup_{ \alpha \in \calF} \left\{\, \E\,[\,\alpha(X) +\E\,[\,u(\calZ - \alpha(X)) \mid X, A = d(X)\,]\,] \,\right\}\\[0.05in]
& \leq \, \E\,\left[\,\sup_{ s \in \mathbb{R}} \left\{\,s + \E\,[\,u(\calZ - s) \mid X, A = d(X)\,]\,\right\}\,\right]\\[0.05in]
& = \,  \E\,[\, {\cal O}_u(\calZ \,|\, X, A = d(X)) \,],
\end{aligned}
\end{equation*}
where the inequality holds because for any $\alpha(X)$, we have
$\alpha(X) +\E\,[\,u(\calZ - \alpha(X)) \mid X, A = d(X)\,] \leq \displaystyle{
\sup_{ s \in \mathbb{R}}
} \, \left\{\,s + \E\,[\,u(\calZ - s) \mid X, A = d(X)\,]\,\right\}$.	
The right-hand inequality in (\ref{ineq:OCE}) holds because ${\cal F}$ contains all constant functions.
\end{proof}

Our proposed IDR-CDE measures the outcome $\calZ$ via the decision-rule based optimal allocation between the covariate-dependent present
value $\alpha(X)$ and the future gain $\calZ - \alpha(X)$ under the utility function $u$.  Unlike the original OCE, the allocation $\alpha(X)$
depends on the available covariate information $X$ such as environmental factors that can help to decide the optimal allocation. 
{Take linear regression as an example; if the response $\calZ$ can be predicted by the linear combination of covariates $X$, 
then covariates $X$ can explain some variability behind $\calZ$; this could result in the reduction in the variance of $\calZ$ given the 
information of $X$.}  Thus considering
the broader covariate-based allocation $\alpha(X)$ could improve the allocation and further reduce the risk.  This is also demonstrated via
Proposition~\ref{prop: prem}, by recalling that the negative of
the standard OCE is a risk measure; indeed inequality (\ref{ineq:OCE}) confirms that incorporating covariate information may lead to
a reduced risk measure.  Proposition~\ref{prop: exchange} provides sufficient conditions for equality to hold between the IDR-CDE and the
conditional OCE.


Note that ${\cal O}_u(\calZ \,|\, X, A = d(X))$ is a random variable; it is the original OCE corresponding to the random variable with distribution
being the conditional distribution of the random variable $\calZ$ given $X$ and $A = d(X)$.  Thus we may think of it 
as a conditional OCE.
The IDR-CDE preserves many properties of the standard OCE which can be found in \cite{ben2007old}.  The following are several of these properties.

\begin{proposition}\label{basic prop}
Given the two triplets $(X, A, \calZ)$ and $(d, u, \calF)$, the following properties hold:
\begin{itemize}
\item[(a)] \textbf{Shift Additivity}: for any essentially bounded random variable ${\cal Z}$ and
any measurable function $c \in \calF$ such that $c(X)$ is essentially bounded, 
${\cal O}_{(u, \calF)}^{\,d}(\calZ + c\,(X)) = {\cal O}_u^{\,d}(\calZ) + \E\,[\,c\,(X)\,]$;
in particular, ${\cal O}_{(u, \calF)}^{\,d}(c\,(X)) = \E\,[\,c\,(X)\,]$;
\item[(b)] \textbf{Consistency}: for any  measurable function $\widehat{c}$ defined over $\calX \times \calA$ such that
$\widehat{c}\,(X, A)$ is essentially bounded, 
${\cal O}_{(u, \calF)}^{\,d}(\widehat{c}\,(X, A)) = \E\,[\,\widehat{c}\,(X, d(X))\,]$;
\item[(c).] \textbf{Monotonicity}: for any two essentially bounded random variables $\calZ_1$ and $\calZ_2$ such that
$\calZ_1(\omega) \leq \calZ_2(\omega)$ for almost all $\omega\in \Omega$, ${\cal O}_{(u, \calF)}^{\,d}(\calZ_1) \leq {\cal O}_{(u, \calF)}^{\,d}(\calZ_2)$;
\item[(d).] \textbf{Concavity}: for any two essentially bounded random variables $\calZ_1$ and $\calZ_2$ and any $\lambda \in (0, 1)$,
\[
{\cal O}_{(u, \calF)}^{\,d}\left(\lambda \, \calZ_1 + (1 - \lambda)\, \calZ_2\right) \, \geq \,
\lambda\, {\cal O}_{(u, \calF)}^{\,d}(\calZ_1) + (1- \lambda)\,{\cal O}_u^{\,d}(\calZ_2).
\]
\end{itemize}
\end{proposition}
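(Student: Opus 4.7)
The plan is to mirror the classical arguments used for the OCE (as found in \cite{ben2007old}) but to account throughout for the importance-weighted expectation $\E^{\,d}[\,\cdot\,] = \E[\,\cdot\,\II(A=d(X))/\pi(A|X)\,]$. Two facts will be used repeatedly: first, for every integrable function $g$ of the covariate $X$ alone, $\E^{\,d}[\,g(X)\,] = \E[\,g(X)\,]$; second, the integrand inside $\E^{\,d}$ is effectively evaluated on the event $\{A = d(X)\}$, so any expression of the form $\widehat{c}(X,A)$ may be replaced by $\widehat{c}(X,d(X))$ wherever it is multiplied by $\II(A=d(X))$.

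For part (a), I would perform the translation $\beta(X) := \alpha(X) - c(X)$ inside the supremum defining ${\cal O}_{(u,\calF)}^{\,d}(\calZ + c(X))$. Since $\calF$ is a linear space containing $c$, the map $\alpha \mapsto \beta$ is a bijection of $\calF$ onto itself. The translated functional splits as $\E[c(X)] + \E[\beta(X)] + \E^{\,d\,} u(\calZ - \beta(X))$, and taking the sup over $\beta \in \calF$ yields $\E[c(X)] + {\cal O}_{(u,\calF)}^{\,d}(\calZ)$. The special case ${\cal O}_{(u,\calF)}^{\,d}(c(X)) = \E[c(X)]$ then follows from part~(a) of Proposition~\ref{prop: prem}. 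For part (b), the key observation is that multiplication by $\II(A=d(X))$ forces $\widehat{c}(X,A) = \widehat{c}(X,d(X))$ on the support of the weighted integrand, so
\[
{\cal O}_{(u,\calF)}^{\,d}(\widehat{c}(X,A)) \, = \, {\cal O}_{(u,\calF)}^{\,d}(\widehat{c}(X,d(X))),
\]
after which an application of (a) with $\calZ = 0$ and $c(X) = \widehat{c}(X,d(X))$ gives the claimed identity (treating $\widehat{c}(\,\cdot\,,d(\cdot))$ as an element of $\calF$, or otherwise verifying the lower bound directly by testing the supremum at $\alpha = \widehat{c}(\,\cdot\,,d(\cdot))$ so that $u(\widehat{c}(X,A) - \alpha(X)) = u(0) = 0$ on $\{A = d(X)\}$, and the upper bound by using $u(t) \leq t$).

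Part (c) follows from the monotonicity of $u$: if $\calZ_1 \leq \calZ_2$ almost surely, then $u(\calZ_1 - \alpha(X)) \leq u(\calZ_2 - \alpha(X))$ pointwise for every $\alpha \in \calF$, hence the $\E^{\,d}$-weighted expectations satisfy the same inequality (note that the weight $\II(A=d(X))/\pi(A|X)$ is nonnegative), and taking the supremum over $\alpha$ preserves the inequality. Part (d) exploits the concavity of $u$ together with the linearity of $\calF$. For any $\alpha_1, \alpha_2 \in \calF$ set $\alpha := \lambda\alpha_1 + (1-\lambda)\alpha_2 \in \calF$; then
\[
\lambda\calZ_1 + (1-\lambda)\calZ_2 - \alpha(X) \, = \, \lambda(\calZ_1 - \alpha_1(X)) + (1-\lambda)(\calZ_2 - \alpha_2(X)),
\]
and concavity of $u$ together with linearity of $\E$ and $\E^{\,d}$ yields
\[
{\cal O}_{(u,\calF)}^{\,d}(\lambda\calZ_1 + (1-\lambda)\calZ_2) \, \geq \, \lambda\bigl[\E\alpha_1(X) + \E^{\,d}u(\calZ_1 - \alpha_1(X))\bigr] + (1-\lambda)\bigl[\E\alpha_2(X) + \E^{\,d}u(\calZ_2 - \alpha_2(X))\bigr].
\]
Taking the supremum over $\alpha_1$ and $\alpha_2$ independently produces $\lambda{\cal O}_{(u,\calF)}^{\,d}(\calZ_1) + (1-\lambda){\cal O}_{(u,\calF)}^{\,d}(\calZ_2)$.

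The only genuine subtlety I anticipate is the measurability/integrability bookkeeping in part (b): the natural maximizer $\alpha(X) = \widehat{c}(X,d(X))$ needs to lie in $\calF$ for the lower bound to be attained exactly. If $\calF$ is rich enough (for instance, all bounded measurable functions, as suggested after the definition) this is immediate; otherwise one achieves the lower bound by either a density/approximation argument within $\calF$ or by invoking the $u(t) \leq t$ inequality for the upper bound and $u(0) = 0$ with a test function matching $\widehat{c}(\cdot,d(\cdot))$ as closely as $\calF$ permits. The remaining parts are routine once the weighted-expectation rewriting and the change-of-variable within $\calF$ are in place.
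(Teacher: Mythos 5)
Your proposal is correct and follows essentially the same route as the paper's proof: the translation $\alpha\mapsto\alpha-c$ within the linear space $\calF$ for (a), the upper bound via $u(t)\leq t$ combined with testing the supremum at $\alpha=\widehat{c}(\cdot,d(\cdot))$ (where $u(0)=0$ on $\{A=d(X)\}$) for (b), monotonicity of $u$ for (c), and concavity of $u$ with a convex combination of test functions for (d). Your explicit flagging of the requirement that $\widehat{c}(\cdot,d(\cdot))$ lie in $\calF$ in part (b) is a point the paper's proof passes over silently, but the substance of the argument is identical.
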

\begin{proof}
(a) 
We have
\[
\begin{array}{l}
{\cal O}_{(u, \calF)}^{\,d}(\calZ + c\,(X)) \\[0.05in]
\epc = \, \displaystyle\sup_{ \alpha \in \calF} \left\{\, \E\,[\,\alpha(X)\,] + \E^{\,d\,}[\, u(\calZ + c\,(X)- \alpha(X))\,]\, \right\}
\nonumber\\[0.05in]
\epc = \, \E\,[\,c\,(X)\,] + \displaystyle\sup_{ \alpha \in \calF} \left\{\,\E[\,\alpha(X)- c\,(X)\,] + \E^{\,d\,}[\,u(\calZ + c\,(X)- \alpha(X))\,] \,\right\} \nonumber \\[0.05in]
\epc = \, \E\,[\,c\,(X)\,]  + \displaystyle\sup_{ (\alpha - c) \in \calF} \left\{\, \E\,[\,(\alpha - c)(X)\,] + \E^{\,d\,}[\,u(\calZ - (\alpha - c)(X))\,] \,\right\}
\nonumber \\[0.1in]
\epc = \, \E\,[\,c\,(X)\,]  + {\cal O}_{(u, \calF)}^{\,d}(\calZ),
\end{array}
\]
where the third equality holds since $\calF$ is a linear space.

\gap

\noindent
(b) Since 
$u(t) \leq t$, we have
\[
{\cal O}_{(u, \calF)}^{\,d}(\calZ) \, \leq\, \sup_{ \alpha \in \calF} \left\{\,\E\,[\,\alpha(X)\,] + \E^{\,d\,}[\,\calZ - \alpha(X)\,]\right\}
\, =  \, 
\E^{\,d\,}[\, \calZ \, ], 
\]
where the equality holds because $\E^{\,d\,}[\,\alpha(X)\,] = \E\,[\,\alpha(X)\,]$ by the definition of $\IP^{\,d}$.
Therefore, if $\calZ = \hat{c}(X, A)$ is essentially bounded, then
	\begin{equation*}
	\begin{array}{lll}
	{\cal O}_{(u, \calF)}^{\,d}(\calZ) & \leq & \E^{\,d\,}[\,\widehat{c}\,(X, A)\,] \\ [0.1in]
	& = & \E\,\left[\,\displaystyle{
\frac{\widehat{c}\,(X, A) \; \II(d(X) = A)}{\pi(A | X)}
} \,\right] \\[0.2in]
	& = & \E\,\left[\,\displaystyle{
\frac{\widehat{c}\,(X, d(X)) \; \II(d(X) = A)}{\pi(A | X)}
} \,\right]	\, = \, \E\,\left[\, \widehat{c}\,(X, d(X))\, \right].
	\end{array}		
	\end{equation*}
	Since $u(0) = 0$, by the definition of the supreme in ${\cal O}_{(u, \calF)}^{\,d}$, we derive
	\begin{equation*}
	\begin{array}{lll}
	{\cal O}_{(u, \calF)}^{\,d}(\widehat{c}\,(X, A)) & \geq & \E\,[\,\widehat{c}\,(X, d(X))\,] + \E^{\,d\,}[\,u(\widehat{c}(X, A) - \widehat{c}\,(X, d(X))\,] \\[0.1in]
    & = & \E\,[\,\widehat{c}\,(X, d(X))\,] + \E^{\,d\,}\,[\,u(\widehat{c}\,(X, d(X)) - \widehat{c}\,(X, d(X)))\,] \\ [0.1in]
    & = & \E\,[\,\widehat{c}\,(X, d(X))\,].
	\end{array}
	\end{equation*}
	Thus, ${\cal O}_{(u, \calF)}^{\,d}(\widehat{c}\,(X, A)) = \E\,[\,\widehat{c}\,(X, d(X))\,]$.\\
	
\noindent
(c) If $\calZ_1 \leq \calZ_2$, then $\calZ_1 - \alpha(X) \leq \calZ_2 - \alpha(X)$ for $\alpha \in \calF$.
Since $u \in U_0$ is a non-decreasing utility function, it follows that
	\begin{equation*}
	\begin{aligned}
	{\cal O}_{(u, \calF)}^{\,d}(\calZ_1) &= \sup_{\alpha \in \calF} \left\{\E\,[\,\alpha(X)\,] + \E^{\,d\,}[\,u(\calZ_1 - \alpha(X))\,]\, \right\} \\[0.1in]
	&\leq \sup_{\alpha \in \calF} \left\{\,\E\,[\,\alpha(X)\,] + \E^{\,d\,}[\,u(\calZ_2 - \alpha(X)) \,]  \, \right\}
	\, =\, {\cal O}_{(u, \calF)}^{\,d}(\calZ_2).
	\end{aligned}
	\end{equation*}	
\noindent
(d) For any $\lambda \in (0, 1)$, denote a random variable $\calZ_\lambda \,\triangleq \, \lambda\, \calZ_1 + (1 - \lambda)\,\calZ_2$ and a measurable function $\alpha_\lambda(X) \,\triangleq \, \lambda \,\alpha_1(X) + (1-\lambda)\,\alpha_2(X)$. Clearly $\calZ_\lambda$ is essentially bounded and $\alpha_\lambda(X) \in \calF$.
 Then by the concavity of $u$, we have
	\begin{equation*}
	\begin{array}{l}
	\E\,[\,\alpha_\lambda (X)\,] + \E^{\,d\,}[\,u(\calZ_\lambda - \alpha_\lambda(X))\,] \, \geq \,
	\lambda \left(\,\E\,[\,\alpha_1(X)\,] + \E^{\,d\,}[\,u(\calZ_1 - \alpha_1(X))\,]\,\right) + \\ [0.1in]
	\hspace{2.2in} (1-\lambda)\left(\E\,[\,\alpha_2(X)\,] + \E^{\,d\,}[\,u(\calZ_2 - \alpha_2(X))\,]\,\right).
	\end{array}
	\end{equation*}
Taking supremum over $\alpha_1$ and $\alpha_2$ on both sides, we may derive the stated result.
\end{proof}

Properties (a) and (b) extend corresponding results of the original OCE \cite[Theorem~2.1]{ben2007old} from a constant $\eta$ to a measurable function
that depends on $X$ and $A$; properties (c) and (d) are essentially the same as those in \cite[Theorem~2.1]{ben2007old}.
These properties justify the use of the IDR-CDE in decision making.  Shift Additivity means if the outcome is shifted by
some function over covariates, the IDR-CDE measure is shifted by the average of this function.  Thus the IDR $d$ is invariant under such a shift.
Consistency means that to evaluate the IDR-CDR of a measurable function over $\calX \times \calA$
is equivalent to evaluating the expectation of this random function when the action follows the decision rule $d$.
Monotonicity and concavity have the same respective meanings as the OCE: the former guarantees a larger CDE for a (stochastically) larger outcome;
the latter ensures that the IDR-CDE of a convex combination of two outcomes given a decision rule $d$
is always better than only considering each single outcome separately; this property encourages the simultaneous combination of multiple outcomes
for better results.
\subsection{The IDR optimization problem} \label{subsec:the IDE opt}
We employ the IDR-CDE to evaluate the decision rule $d$ of the outcome $\calZ$ via its optimized covariate equivalent,
with the goal of estimating an optimal IDR that maximizes the IDR-CDE given the pair $(u,\calF)$ in the following sense.
\begin{definition}\rm
Given the triplet $(X, A, \calZ)$, the pair $(u,\calF)$, and the family ${\cal D}$ of decision rules, an optimal IDR is a
rule $d^*$ such that
\[
d^\ast(X) \, \in \, \operatornamewithlimits{argmax}_{d \in \calD} \ {\cal O}^{\,d}_{(u,\calF)}(\calZ),
\]
if such a maximizer exists.  \hfill $\Box$
\end{definition}
\noindent
Thus we can compute $d^\ast(X)$ and the optimal allocation $\alpha^\ast(X)$ jointly by solving
\begin{equation}\label{eq: goal}
\sup_{d \in \calD, \alpha \in \calF} \E\,[\,\alpha(X)\,] + \E^{\,d\,}[\,u(\calZ - \alpha(X))\,].
\end{equation}
The rest of the paper is devoted to the solution of this optimization problem.  The discussion is divided into two cases
depending on whether we can exchange the supremum over $\alpha$ and the expectation $\E^{\, d}$ in ${\cal O}^{\,d}_{(u,\calF)}(\calZ)$.
The exchangeable case requires the theory of decomposable space from variational analysis; this leads to an ``explicit'' determination
of the optimal IDR via the evaluation of the conditional OCE given the covariate $X$ and the finite actions $a \in {\cal A}$;
see Proposition~\ref{thm: optimal IDR}.  The general case requires the numerical solution of an empirical optimization problem obtained
from sampling of the covariates among available data.
\subsection{Decomposable space and normal integrand} \label{subsec:decomposable}

In order to exchange the supreme over $\alpha(X)$ and expectation with respect to $\E^{\, d}$, we need to first introduce the concept
of a decomposable space and the normal integrand. 
\begin{definition} \rm
\cite[Definitions~14.59 and 14.27]{rockafellar2009variational}.
A space $\calM$ of ${\cal B}_0$-measurable functions is {\sl decomposable} relative to an underlying measure space $(\Omega_0, \calB_0, \mu)$
if for every function $x_0 \in \calM$, every set $G \in \calB_0$ with $\mu(G) < \infty$ and any bounded,
measurable function $x_1$, the function $x_2(t) = x_0(t)\II(t \not\in G) + x_1(t) \II(t \in G)$ belongs to $\calM$.
%
An extended-value function $f: \Omega_0 \times \mathbb{R} \rightarrow (-\infty, \infty]$ is a {\sl normal integrand} if its
epigraphical mapping
$\omega \rightarrow \mbox{epi } f(\omega,\cdot)$ is closed-valued and measurable.  \hfill $\Box$
\end{definition}

The space ${\cal L}^{\,r}(\calX, \Xi, \IP_{\cal X})$ is decomposable for $r \in [ 1, \infty ]$ but the family of constant functions
is not decomposable.
These facts will be used in the examples to be discussed in the next subsection.

We will employ the following simplified version of \cite[Theorem~14.60]{rockafellar2009variational} that provides the
required conditions for the exchange of the supremum and expectation in our context.

\begin{theorem} \label{th:normal integrand}
Let ($\Omega_0$, $\calB_0$, $\mu$) be a probability measure space, and $\calM$ be a decomposable space of
$\calB_0$-measurable functions.
Let $f: \Omega_0 \times \mathbb{R} \rightarrow (-\infty, \infty]$ be a normal integrand; let the integral functional
$I_f(x) = \int_{\Omega_0}f(x(\omega), \omega) d\mu(\omega)$ be defined on $\calM$.  The following two statements hold:

(a) $\displaystyle{
\inf_{x \in \calM}
} \, \int_{\Omega_0}f(x(\omega), \omega) d\mu(\omega) = \int_{\Omega_0} \displaystyle{
\inf_{s \in \mathbb{R}}
} \, f(s, \omega) d\mu(\omega)$ as long as $I_f(x)$ is finite; and

(b) $x_0 \in \underset{x \in \calM}{\argmin}\,  I_f(x) \Longleftrightarrow x_0(\omega) \in \underset{s \in \mathbb{R}}{\argmin}\, f(s, \omega)$
almost surely.  \hfill $\Box$
\end{theorem}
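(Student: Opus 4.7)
The plan is to prove (a) first and deduce (b) quickly. For (a), the direction $\inf_{x \in \calM} I_f(x) \geq \int_{\Omega_0} \inf_{s \in \mathbb{R}} f(s, \omega) \, d\mu(\omega)$ is trivial: pointwise one has $f(x(\omega),\omega) \geq \inf_s f(s,\omega)$, and normality of $f$ ensures the pointwise infimum is itself $\calB_0$-measurable, so the inequality survives integration into an honest extended-real integral. The nontrivial content is the reverse inequality.

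For that, I plan to construct, for each $\epsilon > 0$, an approximate minimizer in $\calM$. Normality of $f$ says the epigraphical multifunction $\omega \mapsto \mathrm{epi}\, f(\omega,\cdot)$ is closed-valued and measurable; by a standard measurable selection theorem of Kuratowski--Ryll-Nardzewski type applied to the (still measurable, closed-valued) sublevel multifunction $\omega \mapsto \{ s \, : \, f(s,\omega) \leq \inf_t f(t,\omega) + \epsilon \}$, I obtain a measurable selector $\widehat{x}_\epsilon: \Omega_0 \to \mathbb{R}$ with $f(\widehat{x}_\epsilon(\omega), \omega) \leq \inf_s f(s,\omega) + \epsilon$ on the set where the infimum is finite. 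The hypothesis that $I_f(x)$ is finite for some $x \in \calM$ rules out $\inf_s f(s,\omega) = -\infty$ on a set of positive $\mu$-measure, after a short argument: otherwise I could splice such an $x$ with an unbounded selection on that set and drive $I_f$ to $-\infty$, contradicting finiteness.

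This $\widehat{x}_\epsilon$ need not live in $\calM$, which is where decomposability enters. Fixing a reference $x_0 \in \calM$ with $I_f(x_0)$ finite, I truncate $\widehat{x}_\epsilon$ to $\widehat{x}_{\epsilon,n}$, a bounded measurable function agreeing with $\widehat{x}_\epsilon$ on the set $G_n := \{\,\omega \, : \, |\widehat{x}_\epsilon(\omega)| \leq n\,\}$, which has $\mu(G_n) \uparrow 1$. Decomposability then places $x_n(\omega) := x_0(\omega)\, \II(\omega \notin G_n) + \widehat{x}_{\epsilon,n}(\omega)\, \II(\omega \in G_n)$ inside $\calM$. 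Splitting $I_f(x_n)$ over $G_n$ and its complement and using monotone/dominated convergence (available because $\mu$ is a probability measure and $I_f(x_0)$ is finite) gives $\limsup_n I_f(x_n) \leq \int_{\Omega_0} f(\widehat{x}_\epsilon(\omega),\omega)\, d\mu(\omega) \leq \int_{\Omega_0} \inf_s f(s,\omega)\, d\mu(\omega) + \epsilon$. Sending $\epsilon \downarrow 0$ closes the gap and proves (a). Part (b) then falls out: the implication $(\Leftarrow)$ is immediate since pointwise minimization forces $I_f(x_0) = \int \inf_s f(s,\omega)\, d\mu = \inf_{x \in \calM} I_f(x)$; and $(\Rightarrow)$ holds because if $x_0$ attains the infimum then the pointwise-nonnegative integrand $\omega \mapsto f(x_0(\omega),\omega) - \inf_s f(s,\omega)$ has zero integral and so vanishes $\mu$-a.s.

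The main obstacle I expect is the combined measurable-selection and decomposability step: producing a measurable $\epsilon$-minimizer that actually lies in $\calM$ requires controlling the pointwise bound on $f(\widehat{x}_\epsilon,\omega)$ while respecting the boundedness and finite-measure conditions built into decomposability, and ensuring the splice $x_n$ approximates $\widehat{x}_\epsilon$ well enough in $I_f$ as $n \to \infty$. The measurable selection itself rests on the nontrivial equivalence (embedded in the normal-integrand definition) between epigraphical measurability of the closed-valued multifunction $\omega \mapsto \mathrm{epi}\, f(\omega,\cdot)$ and the graph-measurability conditions that feed the classical selection theorems; once this is in hand, the remaining convergence estimates are routine real-analysis.
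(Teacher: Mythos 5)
The paper does not prove this theorem at all: it is stated as a ``simplified version of [Rockafellar--Wets, Theorem 14.60]'' and imported by citation, with the terminating $\Box$ signalling that no argument is given. So there is no in-paper proof to compare against; what you have written is essentially a reconstruction of the standard Rockafellar--Wets argument, and its architecture is sound: the trivial $\geq$ direction via pointwise domination and measurability of $\omega \mapsto \inf_s f(s,\omega)$ (which does follow from normality), a measurable selection from the closed-valued $\varepsilon$-sublevel mapping, and the decomposability splice with a reference $x_0$ on the complement of the truncation set $G_n$ to land the approximate minimizer inside $\calM$. Part (b) as a corollary of (a) via the nonnegative integrand $f(x_0(\omega),\omega) - \inf_s f(s,\omega)$ is also the standard route.

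One step is mislabeled, though it does not sink the argument for (a). You claim that finiteness of $I_f(x)$ for some $x \in \calM$ \emph{rules out} $\inf_s f(s,\omega) = -\infty$ on a set of positive measure, ``contradicting finiteness.'' There is no contradiction: finiteness of $I_f$ at one point of $\calM$ is perfectly compatible with $\inf_{x\in\calM} I_f(x) = -\infty$. The correct disposition of that case is that your own splicing construction shows the left-hand side of (a) is then $-\infty$, the right-hand side is manifestly $-\infty$, and the equality holds trivially, so the case needs no exclusion --- it needs to be absorbed. The place where the degenerate case genuinely matters is (b): if the common value is $-\infty$, the subtraction $f(x_0(\omega),\omega) - \inf_s f(s,\omega)$ is ill-defined and the argmin equivalence can fail; Rockafellar--Wets add the hypothesis that the common infimum is not $-\infty$ precisely for this reason, and you should state that you are working under it (it is implicit in the paper's ``as long as $I_f(x)$ is finite''). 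Finally, in the splice estimate you should say explicitly that $f(\wh{x}_\epsilon(\omega),\omega) \leq f(x_0(\omega),\omega) + \epsilon$ furnishes an integrable majorant of the positive part, which is what licenses $\limsup_n \int_{G_n} f(\wh{x}_\epsilon(\omega),\omega)\,d\mu \leq \int_{\Omega_0} f(\wh{x}_\epsilon(\omega),\omega)\,d\mu$; as written, the convergence claim leans on ``monotone/dominated convergence'' without naming the dominating function.
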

\noindent
The following proposition shows that if ${\cal F}$ is decomposable, then equality holds between the IDR-CDE and the conditional OCE.

\begin{proposition} 	\label{prop: exchange}
If $\calF$ is a decomposable space relative to $(\calX, \Xi, \IP_X)$, then
\[
{\cal O}_{(u, \calF)}^{\,d}(\calZ) = \E\,[\,{\cal O}_u(\calZ \,|\, X, A = d(X)) \,].
\]
\end{proposition}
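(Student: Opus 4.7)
The plan is to leverage Proposition \ref{prop: prem}(b), which already gives the inequality ${\cal O}_{(u, \calF)}^{\,d}(\calZ) \leq \E[{\cal O}_u(\calZ \mid X, A = d(X))]$ without any decomposability assumption. The decomposability of $\calF$ will be used to upgrade this to equality by invoking the interchange result in Theorem \ref{th:normal integrand}.

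First I would rewrite the IDR-CDE in the conditional form that appears in the proof of Proposition \ref{prop: prem}(b), namely
\[
{\cal O}_{(u, \calF)}^{\,d}(\calZ) \; = \; \sup_{\alpha \in \calF} \, \int_{\calX} g(\alpha(x), x) \, d\IP_X(x),
\]
where $g(s, x) \triangleq s + \E[\, u(\calZ - s) \mid X = x, A = d(x) \,]$. Setting $f(s, x) \triangleq - g(s, x)$ converts the supremum into an infimum so that Theorem \ref{th:normal integrand} applies directly with $\Omega_0 = \calX$, $\calB_0 = \Xi$, $\mu = \IP_X$, and $\calM = \calF$.

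The key technical step is to verify that $f$ is a normal integrand. For each fixed $x$, since $u \in \calU$ is upper semicontinuous and concave, so is the section $s \mapsto \E[u(\calZ - s) \mid X = x, A = d(x)]$ (essential boundedness of $\calZ$ plus bounded/monotone convergence takes care of the semicontinuity and preserves concavity). Hence $f(\cdot, x)$ is a proper lower semicontinuous convex function of $s$ for every $x$. Joint measurability of $f$ in $(s, x)$ follows from the measurability of the conditional expectation as a function of $x$ together with the continuity in $s$; a Carath\'eodory-type function that is lsc in $s$ and measurable in $x$ is a normal integrand (cf.\ \cite[Example 14.29]{rockafellar2009variational}). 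This is where I expect the main technical delicacy to lie — checking the joint measurability and semicontinuity carefully rather than just invoking it.

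Once $f$ is known to be a normal integrand and $\calF$ is decomposable with $I_f$ finite, Theorem \ref{th:normal integrand}(a) yields
\[
\inf_{\alpha \in \calF} \int_{\calX} f(\alpha(x), x) \, d\IP_X(x) \; = \; \int_{\calX} \inf_{s \in \mathbb{R}} f(s, x) \, d\IP_X(x),
\]
which, after reverting the sign change, becomes
\[
\sup_{\alpha \in \calF} \int_{\calX} g(\alpha(x), x) \, d\IP_X(x) \; = \; \int_{\calX} \sup_{s \in \mathbb{R}} \bigl[\, s + \E[\, u(\calZ - s) \mid X = x, A = d(x) \,] \,\bigr] \, d\IP_X(x).
\]
Recognizing the inner supremum as ${\cal O}_u(\calZ \mid X = x, A = d(x))$ and rewriting the outer integral as $\E[\,\cdot\,]$ closes the argument, giving the reverse inequality and hence the claimed equality.
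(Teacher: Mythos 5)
Your proposal is correct and follows essentially the same route as the paper: both rewrite the IDR-CDE as a supremum over $\calF$ of an integral of the conditional integrand, verify that (after a sign flip) this integrand is a normal integrand, and then invoke Theorem \ref{th:normal integrand} with the decomposability of $\calF$ to interchange the supremum and the expectation, identifying the inner supremum as the conditional OCE. Your extra care in checking the semicontinuity and joint measurability is a more detailed version of the paper's one-line appeal to \cite[Example~14.31]{rockafellar2009variational}, not a different argument.
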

\begin{proof}
Note that $\E\,[\,\alpha(X) + u(\calZ - \alpha(X)) \mid X, A = d(X)\,]$ is measurable with respect to $X$ and upper semi-continuous
with respect to $\alpha(X)$ for any $X$, 
thus
is a normal integrand \cite[Example~14.31]{rockafellar2009variational}.  Hence we have
\begin{equation*}\label{normal integrand}
\begin{aligned}
{\cal O}_{(u, \calF)}^d(\calZ) & =\, \sup_{ \alpha \in \calF} \left\{\, \E\,[\,\E\,[\,\alpha(X) + u(\calZ - \alpha(X)) \mid X, A = d(X)\,]\,] \,\right\}\\
& =\, \E\,\left[\,\sup_{ s \in \mathbb{R}} \left\{\,s + \E\,[\,u(\calZ - s) \mid X, A = d(X)\,]\,\right\}\,\right] \\[0.05in]
& = \,  \E\,[\, {\cal O}_u(\calZ \,|\, X, A = d(X)) \,],
\end{aligned}
\end{equation*}
where the second equality is by Theorem~\ref{th:normal integrand} because $\calF$ is decomposable and $\calZ$ is bounded.
\end{proof}
\begin{remark}\rm
Since the conditional OCE is independent of the space $\calF$, it follows that so is ${\cal O}_{(u, \calF)}^d(\calZ)$
provided that $\calF$ is decomposable relative to $(\calX, \Xi, \IP_X)$.
Thus, in the following, if we specify $\calF$ to be decomposable, then we omit $\calF$ and write the IDR-CDE of the random variable $\calZ$ as
${\cal O}_{u}^d(\calZ)$.  \hfill $\Box$
\end{remark}

As a result of Proposition \ref{prop: exchange}, we can characterize the optimal IDR explicitly if $\calF$ is a decomposable space.
We recall that ${\cal A}$ is a finite set.

\begin{proposition}\label{thm: optimal IDR}
	For a given decomposable space $\calF$ and utility function $u \in \calU$, an optimal IDR is given by
	\begin{equation} \label{eq:optimal IDR decomposable}
	d^{\,\ast}(X) \in \operatornamewithlimits{argmax}_{a \in \calA} \ {\cal O}_u(\calZ \,|\, X, A = a).
	\end{equation}
\end{proposition}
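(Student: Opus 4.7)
The plan is to reduce the optimization over the full class $\calD$ of measurable decision rules to a pointwise maximization over the finite action set $\calA$, using Proposition \ref{prop: exchange} as the bridge. First I would invoke Proposition \ref{prop: exchange} to rewrite, for any $d \in \calD$,
\[
{\cal O}_{(u, \calF)}^{\,d}(\calZ) \, = \, \E\,[\,{\cal O}_u(\calZ \,|\, X, A = d(X))\,],
\]
which holds because $\calF$ is assumed decomposable relative to $(\calX, \Xi, \IP_X)$. This converts the supremum in (\ref{eq: goal}) into a pure maximization over $d$ of an expectation whose integrand depends on $d$ only through the evaluation $d(X) \in \calA$.

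Next I would argue that, since $\calA$ is a finite set, the function $\psi : \calX \times \calA \to \mathbb{R}$ defined by $\psi(x,a) \triangleq {\cal O}_u(\calZ \,|\, X=x, A=a)$ admits a pointwise maximizer in $a$ for every $x$. Writing
\[
m(x) \, \triangleq \, \max_{a \in \calA} \psi(x,a),
\]
we have $\E\,[\,{\cal O}_u(\calZ \,|\, X, A = d(X))\,] \leq \E\,[\,m(X)\,]$ for every $d \in \calD$, simply because the pointwise inequality $\psi(x,d(x)) \leq m(x)$ holds for every $x$. Thus $\E\,[\,m(X)\,]$ is an upper bound on ${\cal O}_{(u, \calF)}^{\,d}(\calZ)$ over $d \in \calD$.

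To achieve the upper bound, I would exhibit a measurable selector $d^{\,\ast} : \calX \to \calA$ satisfying $\psi(x,d^{\,\ast}(x)) = m(x)$ for $\IP_X$-almost every $x$. Because $\calA$ is finite, such a selector can be constructed by a standard lexicographic rule: fix an enumeration $\calA = \{a_1,\dots,a_K\}$ and define
\[
d^{\,\ast}(x) \, \triangleq \, a_{k^\ast(x)}, \qquad k^\ast(x) \, \triangleq \, \min\{\, k \, : \, \psi(x,a_k) = m(x)\,\}.
\]
Each level set $\{x : k^\ast(x) = k\}$ is a finite Boolean combination of the measurable sets $\{x : \psi(x,a_k) \geq \psi(x,a_j)\}$, so $d^{\,\ast}$ is $\Xi$-measurable and hence belongs to $\calD$. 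By construction $\psi(X,d^{\,\ast}(X)) = m(X)$ almost surely, so ${\cal O}_{(u, \calF)}^{\,d^{\,\ast}}(\calZ) = \E\,[\,m(X)\,]$, and $d^{\,\ast}$ attains the supremum in (\ref{eq: goal}), as claimed in (\ref{eq:optimal IDR decomposable}).

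The step I expect to be the only subtle one is the measurable selection, but the finiteness of $\calA$ trivializes it; no measurable-selection theorem is needed beyond the elementary lexicographic construction. Everything else is a direct consequence of Proposition \ref{prop: exchange}.
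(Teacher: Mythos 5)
Your proof is correct and follows essentially the same route as the paper's: reduce via Proposition \ref{prop: exchange} to maximizing $\E\,[\,{\cal O}_u(\calZ \,|\, X, A = d(X))\,]$, bound this pointwise by the maximum over the finite action set, and then verify that a maximizing selector is measurable. Your lexicographic tie-breaking is a slightly more careful treatment of the measurability step than the paper's (which only observes that the sets $\{x : {\cal O}_u(\calZ\,|\,X=x,A=a) \geq \max_{a'\neq a}{\cal O}_u(\calZ\,|\,X=x,A=a')\}$ are measurable, without resolving ties), but it is the same argument in substance.
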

\begin{proof}
By the definition of ${\cal O}^{\,d}_u(\calZ)$, we have for any $d \in {\cal D}$,
\[ \begin{array}{lll}
\E\,\left[\,{\cal O}_u(\calZ \,|\, X, A = d(X)) \,\right]
& = & \E\,\left[\,\displaystyle{
\sum_{a \in \calA}
} \, \II(d(X) = a) \, {\cal O}_u(\calZ \, | \, X, A = a) \, \right] \\ [0.2in]
& \leq & \E \, \left[ \, \displaystyle{
\sum_{a \in \calA}
} \, \II(d(X) = a) \, \displaystyle{
\max_{a^{\prime} \in {\cal A}}
} \, {\cal O}_u(\calZ \, | \, X, A = a^{\prime}) \, \right] \\ [0.2in]
& = & \E \, \left[ \, \left( \, \displaystyle{
\max_{a^{\prime} \in {\cal A}}
} \, {\cal O}_u(\calZ \, | \, X, A = a^{\prime}) \, \right) \, \displaystyle{
\sum_{a \in \calA}
} \,  \II(d(X) = a) \, \right] \\ [0.2in]
& = & \E \, \left[ \, \displaystyle{
\max_{a^{\prime} \in {\cal A}}
} \, {\cal O}_u(\calZ \, | \, X, A = a^{\prime}) \, \right].
\end{array} \]
Therefore if (\ref{eq:optimal IDR decomposable}) holds, then $d^{\, \ast}$ is maximizing.  Such a $d^{\, \ast}$ is a measurable function
because being an optimal IDR,
$d^{\, \ast}(X) = a$ if and only if ${\cal O}_u(\calZ \,|\, X, A = a) \geq \displaystyle{
\max_{a^{\, \prime} \neq a}
} \, {\cal O}_u(\calZ \,|\, X, A = a^{\, \prime})$ and
${\cal O}_u(\calZ \,|\, X, A = a) \geq \displaystyle{
\max_{a^{\, \prime} \neq a}
} \, {\cal O}_u(\calZ \,|\, X, A = a^{\, \prime})$ is a measurable set with respect to $X$.
\end{proof}

\begin{remark}\rm
The explicit expression of an optimal IDR is valid only when the space $\calF$ is decomposable.
{If the conditional distribution of $\calZ$
given $X$ and $A = a$ is known, then it is possible
to compute the individualized OCE ${\cal O}_u(\calZ \,|\, X, A = a)$ directly.
For example, if we make certain parametric assumptions on this conditional distribution,
we may be able to estimate these parameters based on the collected data and obtain
optimal IDRs based on Proposition~\ref{thm: optimal IDR}.  This is similar to the model-based methods in the literature
of the expected-value function maximization approach.  However, the empirical performance could be affected by the possible model misspecification. Therefore the individualized OCE ${\cal O}_u(\calZ \,|\, X, A = a)$ is primarily a conceptual notion
and the expression (\ref{eq:optimal IDR decomposable}) is mainly for interpretation.}
\hfill $\Box$
\end{remark}

According to Proposition \ref{thm: optimal IDR}, an optimal IDR under our proposed CDE can be obtained by choosing the decision rule with the
largest individualized OCE.  In the next subsection, we will characterize the IDR-OCE via several illustrative examples for both decomposable
and non-decomposable families of covariate functions.
	
\subsection{Illustrative examples} \label{subsec:examples}

We present several common utility functions to further explain the IDR-CDE for individualized decision making.
We will focus on two families: ${\cal L}^{\, r}(\calX, \Xi, \IP_{X})$ for some $r \in [1, \infty]$ and a family of constant function which we denote
$\calF_c$.  The former family is a decomposable linear space and the latter family is not decomposable.

\begin{example}[Identity utility function]\label{ex 1}\rm
Let $u(t) = t$, then by the definition, we can obtain ${\cal O}_u^{\,d}(\calZ) = \E^{\,d\,}[\,\calZ\,]$ for both families $L^1(\calX, \Xi, \IP_{X})$ and $\calF_c$.
This recovers the expected-value maximization framework in the existing literature of precision medicine.  By Proposition~\ref{thm: optimal IDR},
for the family ${\cal L}^{\, r}(\calX, \Xi, \IP_{X})$, an optimal IDR under the identity utility function is given by:
\[
d^{\,\ast}(X) \in \displaystyle\operatornamewithlimits{argmax}_{a \in \calA} \E\,[\,\calZ \,|\, X, A = a \,] \, ,
\]
which is equivalent to  the action with the largest expected outcome $\calZ$ among all the actions given covariates $X$.
\hfill $\Box$
\end{example}

\begin{example}[Piecewise Linear Utility Function]\label{ex 2}\rm
Let \[u(t) = \xi_1 \max(0, t) - \xi_2 \max(0, -t),\epc\mbox{where} \;\, 0 \leq \xi_1 < 1 < \xi_2.\] It can be verified that $u \in U_0$.

\gap

\noindent
{\bf (a) Decomposable space: $\calF = L^{\,1}(\calX, \Xi, \IP_{X})$}. The  corresponding IDR-CDE is
\begin{equation} \label{CVaR}
{\cal O}_u^{\,d}(\calZ) = \sup_{ \alpha \in \calF} \left\{
\begin{array}{ll}
\,\E\,[\,\alpha(X)\,] +  \\[0.1in]
 \E^{\,d\,}[\,\xi_1 \max\,(\,0\, ,\,  \calZ - \alpha(X)\,) - \xi_2 \max\,(\,0\, , \, \alpha(X) - \calZ\,)\,] \,
\end{array}
\right\}.
\end{equation}
Based on Proposition~\ref{prop: exchange} we can write it as: with $\gamma \triangleq \displaystyle{
\frac{1 - \xi_1}{\xi_2-\xi_1}
}$.  Then the ${\cal O}_u^{\,d}(\calZ)$ is equal to
\[
\begin{array}{l}
\E\,\left[\,\sup_{ s \in \mathbb{R}} \big\{\, s + \E\,[\,\xi_1\,\max\,(\,0, \calZ - s\,) -
\xi_2\,\max\,(\,0\,,\, s - \calZ\,) \mid X, A = d(X)\,]\,\big\}\,\right]\\[0.1in]
= \, \xi_1\E^{\,d\,}[\, \calZ\,] + (1-\xi_1)\,\E\,\left[\, \sup_{ s \in \mathbb{R}} \left\{\, s  - \frac{1}{\gamma}\, \E\,[
\,\max\,(\,0\,,\, s - \calZ\,) \mid  X, A = d(X)\,]\right\}\,\right] \\[0.1in]
= \, \xi_1\, \E^{\,d\,}[\, \calZ\,] + (1-\xi_1)\,\E\,[\, \text{CVaR}_{\,\gamma\,}(\calZ \mid X, A = d(X))\,],
\end{array}
\]
where given $X$, the corresponding supremum is attained at the $\gamma$-quantile of conditional distribution of $\calZ$ on $X$ and $A = d(X)$ almost surely.
Therefore, under the piecewise affine utility function, ${\cal O}_u^{\,d}(\calZ)$ can be interpreted as a convex combination of the expected value of $\calZ$
and its expected CVaR given IDR $d$.  Thus this ${\cal O}_u^{\,d}(\calZ)$ considers both $\E^{\,d\,}[\, \calZ\,]$ and CVaR of the outcome simultaneously.
In particular, when $\xi_1 = \xi_2 = 1$, this recovers Example~\ref{ex 1}.

By Proposition \ref{thm: optimal IDR}, a corresponding optimal IDR is
\begin{equation} \label{ex 2 a}
d^{\,\ast}(X) \in \displaystyle\operatornamewithlimits{argmax}_{a \in \calA}\, \big\{\, \xi_1 \,\E\,[\,\calZ \,|\, X, A= a\,] +
(1-\xi_1)\, \text{CVaR}_{\,\gamma}(\calZ \,|\, X, A = a)\,\big\}.
\end{equation}
Therefore, under this piecewise affine utility function, an optimal IDE is to choose the action with the largest convex combination of
expected outcome and CVaR of outcome $\calZ$ among all the actions given covariates $X$.  \hfill $\Box$

\gap

\noindent
{\bf (b) Family of constant functions: $\calF = \calF_c$}.
The IDE-CDE reduces to \cite[Example~2.3]{ben2007old} with IDR $d$ involved:
\[
\begin{aligned}
{\cal O}_{(u, \calF_c)}^{\,d}(\calZ) &= \, \sup_{c \in \mathbb{R}} \left\{\, c + \E^{\,d\,}[\,\xi_1 \, \max\,(\,0\,,\, c - \calZ\,) - \xi_2\,  \max\,(\,0\,,\, c - \calZ\,)\,]    \, \right\}\\[0.05in]
& = \, \xi_1 \, \E^{\,d\,}[\,\calZ\,] + (1-\xi_1)\,\sup_{c \in \mathbb{R}}\left\{\,c - \frac{\xi_2-\xi_1}{1-\xi_1}\,\E^{\,d\,}[\,\max\,(\,0\, , \, c- \calZ\,)\,]     \,\right\}.
\end{aligned}
\]
The supremum in the right-hand side is any $c^\ast$ satisfying $\IP^{\,d\,}(\calZ \leq c^\ast) \geq \gamma$ and $\IP^{\,d\,}(\calZ \geq c^\ast) \leq 1-\gamma$, which
is the $\gamma$-quantile of ${\cal Z}$ under the probability distribution $\IP^{\,d}$, denoted by $Q^d_\gamma(\calZ)$.
The corresponding maximum value is $\xi_1\, \E^{\,d\,}[\,\calZ\,] + (1-\xi_1)\,\text{CVaR}^d_{\,\gamma}(\calZ)$.
By definition, an optimal IDR under $\calF_c$ is given by
\[
d^\ast \in \displaystyle\operatornamewithlimits{argmax}_d\,\left\{\xi_1\E^{\,d\,}[\,\calZ\,] + (1-\xi_1)\,\text{CVaR}^{\,d}_\gamma(\calZ)\,\right\}.
\]
While this expression is insightful, the above optimal IDR $d^\ast$ does not have an explicit form as \eqref{ex 2 a}
since Proposition \ref{thm: optimal IDR} no longer holds by the fact that $\calF_c$ is not a decomposable space.
\hfill $\Box$
\end{example}

\begin{example}[Quadratic Utility]\label{ex 3}\rm
Let
\[
u(t) \, = \, \left\{ \begin{array}{ll}
t - \displaystyle{
\frac{1}{2\tau}
} \, t^2 & \mbox{if $t \, \leq \, \tau$} \\ [0.1in]
\tau/2 & \mbox{otherwise},
\end{array} \right\}, \ \mbox{where} \ \tau \, = \, \displaystyle{
\sup_{\omega \in \Omega}
} \, \calZ(\omega) - \displaystyle{
\inf_{\omega \in \Omega}
} \, \calZ(\omega),
\]
be a quadratic function truncated to be an admissible utility function in the family ${\cal U}$
and to adopt to the range of the random outcome ${\cal Z}$.
Note that $u$ is continuously differentiable with derivative $u^{\, \prime}(t) = \left( \, 1 - \displaystyle{
\frac{t}{\tau}
} \, \right) \, \II(t \leq \tau)$.

\gap
\noindent
{\bf (a) Decomposable space: $\calF = {\cal L}^{\,2}(\calX, \Xi, \IP_{X})$}.
By Proposition \ref{prop: exchange}, we have,
\[
\begin{array}{ll}
{\cal O}_u^{\,d}(\calZ) & = \, \E \, \left[ \, \displaystyle{
\sup_{s \in \mathbb{R}}
} \, \left\{ \, s + \E \, \left[ \, u({\cal Z} - s) \, \mid \, X, A = d(X) \, \right] \, \right\} \, \right] \\ [0.2in]
& = \, \E^{\,d\,}[\,\calZ\,] - \E\,\left[\, \displaystyle{
\frac{1}{2\tau}
} \,\E\,\left[\,\left(\,\calZ - \E\,[\,\calZ \,|\, X, A = d(X)\,]\,\right)^2 \,|\, X, A = d(X)\,\right]\,\right] \\ [0.2in]
& = \, \E^{\,d\,}[\,\calZ\,]- \displaystyle{
\frac{1}{2\tau}
} \, \E\,\left[\,\text{var}(\calZ \, |\,  X, A = d(X))\,\right],
\end{array}
\]
where the supreme $\alpha^\ast(X) = \E\,[\,\calZ \,|\, X, A = d(X)\,]\;$ almost surely and $\text{var}(\bullet)$ is the variance of a random variable.
The second equality is based on \cite[Remark 2.1]{ben2007old} by  noting that $1 + \IE\left[ u^{\, \prime}({\cal Z} - \alpha^*(X)) \right] = 0$.
The interchange between expectation and derivative is justified by the dominated convergence theorem under the restriction that
$s \in \left[ \, \displaystyle{
\inf_{\omega \in \Omega}
} \, {\cal Z}(\omega), \, \displaystyle{
\sup_{\omega \in \Omega}
} \, {\cal Z}(\omega) \, \right]$.
Thus ${\cal O}_u^{\,d}(\calZ)$ can be interpreted as the (individualized) mean-variance risk measure under the decision rule $d$, generalizing the
mean-variance criterion in the absence of $A$ and $X$, which is frequently used in portfolio selection.
An optimal IDR is given by
\[
d^{\,\ast}(X) \,\in  \displaystyle\operatornamewithlimits{argmax}_{a \in \calA}\, \left\{ \, \E\,[\,\calZ \,|\, X, A=a\,] -
\displaystyle{
\frac{1}{2\tau}
} \, \text{var}\,[\, \calZ \, |\,  X, A = a\, ]\, \right\},
\]
which suggests the optimal action to maximize the expected outcome balanced with the variance given covariates $X$.

\gap

\noindent
{\bf (b) Family of constant functions: $\calF = \calF_c$}.
Similar to part (a) above, direct computation yields $	{\cal O}_u^{\,d}(\calZ) = \E^{\,d\,}[\,\calZ\,] - \displaystyle{
\frac{1}{2\tau}
} \, \text{var}^{\, d}(Z)$ with
$c^\ast = \E^{\, d}[\calZ]$, where $\text{var}^{\, d}(Z)$ denotes the variance of a random variable $\calZ$ under $\IP^{\, d}$.
An optimal IDR under $\calF_c$ is
\[
\displaystyle\operatornamewithlimits{argmax}_d \left\{\E^{\,d\,}[\,\calZ\,]-\frac{1}{2\tau}\text{var}^{\,d} \left(Z\right)\right\},
\]
which requires further evaluation by a numerical procedure.  \hfill $\Box$
\end{example}

From Example~\ref{ex 2} and Example~\ref{ex 3}, we see that one of the differences between a covariate-dependent $\alpha(X)$
and a constant $\alpha(X) \in {\cal F}_c$
lies in that for the former, the IDR-CDE considers expected individualized OCE given the decision rule $d$, but for a
constant $\alpha$, in contrast,
the IDR-CDE considers only the OCE of the random variable $\calZ$ under $\IP^{\, d}$.
{To further understand this difference, consider a toy example with $\calZ = X_1A + \varepsilon$,
where both $X_1$ and $\varepsilon$ independently follow the standard normal distribution.  Suppose we use the utility function
in Example~\ref{ex 2}(b) with $\xi_1 = 0$ and $\xi_2 = 2$ to evaluate an IDR $d(X_1) = 1$. By calculation, $c^\ast = 0$ and
thus we are focused on the median of $\calZ$ under the probability distribution $\IP^{\, d}$.  The corresponding
${\cal O}_{(u, \calF_c)}^{\,d}(\calZ) = \E[\E[\calZ\II(\calZ \leq 0)| X, A = 1]]$.  If we have one patient with covariate $X_1 = -2$,
then $\IP(\calZ \leq 0 | X_1 = -2, A = 1) \approx 84\%$. For this patient, ${\cal O}_{(u, \calF_c)}^{\,d}(\calZ)$ evaluates the
outcome lower than about $84\%$-quantile, which is not satisfactory.}  As a result, we may conclude that the optimal IDR cannot be
quantified by comparing each action separately of each other when considering $\alpha(X)$ being constant functions only.
Consequently such an IDR cannot control the individualized OCE.

{
So far we only consider single-stage individualized decision making problems.  It is also meaningful to extend our proposed IDR-CDE to
multi-stage decision-making scenarios in order to deliver time-varying optimal IDRs with risk exposure control.
Since it will require advanced modeling and treatment, we leave such an extension for future research.}

\section{The Empirical IDR Optimization Problem} \label{sec:empirical}
\label{sec:alg}
In this section, we discuss how to numerically solve the optimization problem \eqref{eq: goal} at the empirical level {without assuming any data generating mechanisms}.
In the following, we focus on estimating the optimal IDR with $\calA = \{-1, 1\}$, i.e., a binary action space.
Further, for computational purposes, we restrict the decision rule to be given by: $d( X ) = \mbox{sign}(f( X;\theta ))$
for a parametric linear estimation function:
$f( X;\theta ) = \beta^{\, T} X + \beta_0 = \theta^{\, T} \wh{X}$, where $\theta \triangleq \left( \begin{array}{l}
\beta \\
\beta_0
\end{array} \right) \in \mathbb{R}^{p+1}$
contains the unknown coefficients to be estimated and $\wh{X} \triangleq \left( \begin{array}{l}
X \\
1
\end{array} \right)$. {Extensions to multi-action space and nonlinear decision rules are possible but will necessitate advanced modeling and treatment.  This will be
left for future research.}
Using functional margin representation in standard classification, we then have $\II\left( A = d( X ) \right) = \II\left( A \, f(X;\theta) > 0 \right)$
for any nonzero $f(X;\theta)$.
Therefore, 
the  IDR-CDE optimiation problem can be equivalently written as:
\begin{equation} \label{eq:complete EV-formulation}
\displaystyle{
	\operatornamewithlimits{\mbox{sup}}_{\theta \triangleq ( \beta,\beta_0 ) \in \mathbb{R}^{p+1}, \,
	\alpha \in \calF}
} \, \left\{ \begin{array}{l}
\E\left[ \, {\cal Z} \, \displaystyle{
	\frac{\II( A \, f(X;\theta) > 0 )}{\pi(A | X)}
} \, \right] + \\ [0.2in]
\E\left[ \, \left[ \, \alpha(X) - {\cal Z} + u( {\cal Z} - \alpha(X) ) \, \right] \, \displaystyle{
	\frac{\II( A \, f(X;\theta) > 0 )}{\pi(A | X)}
} \, \right]
\end{array} \right\}.
\end{equation}
Before proceeding, we describe two characteristics of this problem that are important in the algorithmic development
and provide our proposal to address them. 

\gap
\noindent
{\bf (a) The discontinuity of the indicator function.} The function
$\II( A \, f(X;\theta) > 0 )$ is a lower semicontinuous, albeit discontinuous function.
This seems to prohibit us from employing continuous optimization algorithms to solve problem \eqref{eq:complete EV-formulation}.
A natural way to resolve this issue is to approximate the indicator function by a continuous function,
such as the piecewise truncated hinge loss as in \cite{wu2007robust}:
\[
T_{\delta}(x) \, \triangleq \, \displaystyle{
	\frac{1}{2 \, \delta}
} \, \underbrace{\left[ \, \max\left( \, x + \delta, 0 \, \right) - \max\left( \, x - \delta, 0 \, \right) \, \right]}_{\mbox{nonnegative}}\epc \mbox{for some $\delta > 0$},
\]
so that
\[ \begin{array}{lll}
\II\left( A \, f(X;\theta ) \, > \, 0 \right) & \approx & T_{\delta}(A \, f(X;\theta)) \\ [5pt]
& = & \underbrace{\displaystyle{
		\frac{1}{2 \, \delta}
	} \, \max\left( \, A \, f(X;\theta) + \delta, 0 \, \right)}_{\mbox{denoted $T_{\delta}^+(\theta;X,A)$}} -
\underbrace{\displaystyle{
		\frac{1}{2 \, \delta}
	} \, \max\left( \, A \, f(X;\theta) - \delta, 0 \, \right)}_{\mbox{denoted $T_{\delta}^-(\theta;X,A)$}},
\end{array} \]
where both functions $T_{\delta}^{\pm}(\bullet;X,A)$ are nonnegative, convex, and piecewise affine; thus
the approximating function is non-convex and non-differentiable, making the resulting optimization problem:
\begin{equation} \label{eq:truncated hinge loss approx}
\begin{array}{l}
\displaystyle\operatornamewithlimits{sup}_{ \substack{\theta \triangleq ( \beta,\beta_0 ) \in \mathbb{R}^{p+1}, \\[0.04in]
	\alpha \in \calF}}
\left\{
\begin{array}{ll}
\E\left[ \, {\cal Z} \, \displaystyle{
	\frac{T_{\delta}^+(\theta;X,A) - T_{\delta}^-(\theta;X,A)}{\pi(A | X)}
} \, \right] + \\ [0.2in]
\; \E\left[ \, \left[ \, \alpha(X) - {\cal Z} + u( {\cal Z} - \alpha(X) ) \, \right] \, \displaystyle{
	\frac{T_{\delta}^+(\theta;X,A) - T_{\delta}^-(\theta;X,A)}{\pi(A | X)}
} \, \right]\end{array}\right\}
\end{array}
\end{equation}
difficult to solve.  Since we are interested in designing an algorithm that is provably convergent to a properly
defined stationary solution, care is needed to handle the combined features of non-convexity and
non-differentiability in the approximated problem (\ref{eq:truncated hinge loss approx}) and the discontinuity in
\eqref{eq:complete EV-formulation}.  These features are particularly relevant when we consider
the convergence of the former to the latter as
$\delta \downarrow 0$.  To illustrate the difficulty with some algorithms for solving (\ref{eq:truncated hinge loss approx}),
we mention that a majorization-minimization type algorithm \cite{Lange2016MM} may be too complex to implement as
a majorizing function may be quite complicated; block coordinate descent type methods may not converge to a stationary point
of this problem because the needed regularity assumptions \cite{tseng2001convergence} cannot be expected to be satisfied.
Therefore, an alternative way to
tackle the discontinuity of the indicator function is needed, which is the focus of Subsection~\ref{subsec:reformulation}.

\gap
\noindent
{\bf (b) The positive scale-invariance of the indicator function.}  The function $\II( A \, f(X;\theta) > 0 )$ is
positively scale-invariant as any positive scaling of $f(X;\theta)$ will not change the objective value of the
problem \eqref{eq:complete EV-formulation}.  This could cause computational instability, and more seriously, incorrect definition
of the indicator function due to round-off errors; these numerical issues become more pronounced when $f(X;\theta)$ is close to 0
in practical implementation of an algorithm.
One way to guard against such undesirable characteristics of the indicator function is to solve two optimization problems with the
bias term
$\beta_0$ set equal to $\pm 1$, respectively, and accept as the solution the one with a smaller objective value.  In the development
below, this safe guard is adopted as can be seen in the formulation (\ref{eq:OCE binaryoptimization problem}).


\subsection{Difference-of-convex reformulation of (\ref{eq:complete EV-formulation})}
\label{subsec:reformulation}
In this subsection, we propose a method to transform the discontinuous optimization problem \eqref{eq:complete EV-formulation}
that involves the indicator function to a continuous optimization problem by means of a mild assumption.
Our approach is to reformulate the discontinuous problem \eqref{eq:complete EV-formulation} via its epigraphical representation.
Since $\II( \, \bullet\, > 0 )$ is a lower semicontinuous function, its epigraph
\[
{\rm epi}\,\II(  \, \bullet\, > 0 ) \,\triangleq \, \left\{\,(t, s)\in\mathbb{R}\times \mathbb{R}\mid t\geq \II( s > 0 )\,\right\}
\]
is a closed set \cite[Theorem 7.1]{rockafellar1970convex}.
However, the random variable $\mathcal{Z}$ may attain positive values, which makes it also essential to consider the
hypograph of $\II(\bullet > 0 )$, i.e., the set
\[
{\rm hypo}\,\II(  \, \bullet\, > 0 ) \,\triangleq \, \left\{\,(t, s)\in\mathbb{R}\times \mathbb{R}\mid t\leq \II( s > 0 )\,\right\}.
\]
Since the indicator function is not upper semicontinuous, the above set is not closed. We thus consider an approximation
of $\II(\bullet > 0 )$ by an upper semicontinuous function $\II(\bullet \geq 0 )$ that has a closed hypograph
\[
{\rm hypo}\,\II( \, \bullet\, \geq 0 ) \,\triangleq \, \left\{\,(t, s)\in\mathbb{R}\times \mathbb{R}\mid t\leq \II( s \geq 0 )\,\right\}.
\]
Interestingly, the sets ${\rm epi}\,\II(  \, \bullet\,  > 0 )$ and ${\rm hypo}\,\II(  \, \bullet\,\geq 0 )$ are each a finite union of polyhedra
that admits an extremely simple dc representation given in the next lemma. See also Figures~\ref{fig:epi} and \ref{fig:hypo} for illustration.
No proof is required for the lemma.

\begin{lemma}\label{lemma:epi representation}
For any $t,s\in \mathbb{R}$, the following two statements hold:

(i) $(t,s)\in {\rm epi}\,\II( \bullet > 0 )$ if and only if 
$\max(-t, s) - \max(t+s-1, 0)\leq 0$\,;

(ii) $(t,s)\in {\rm hypo}\,\II( \bullet \geq 0 )$ if and only if 
$\max(t+s-1, 0) - \max(-t, s) \leq 0$. \hfill $\Box$
\end{lemma}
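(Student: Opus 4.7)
The plan is to prove both parts by a direct case analysis driven by the sign of $s$, exploiting the fact that the two sets in question are each a finite union of closed (resp.\ half-open) orthants in the $(t,s)$-plane. Specifically, the set ${\rm epi}\,\II(\bullet>0)$ equals $\{(t,s):s\le 0,\,t\ge 0\}\cup\{(t,s):s>0,\,t\ge 1\}$, while ${\rm hypo}\,\II(\bullet\ge 0)$ equals $\{(t,s):s\ge 0,\,t\le 1\}\cup\{(t,s):s<0,\,t\le 0\}$. Once these unions are written down, each part of the lemma amounts to verifying that the given dc inequality cuts out precisely this union.

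For part (i), I would first show the ``only if'' direction by checking the inequality $\max(-t,s)\le\max(t+s-1,0)$ on each of the two pieces. On $\{s\le 0,\,t\ge 0\}$ both $-t$ and $s$ are nonpositive, so the left-hand side is $\le 0 \le \max(t+s-1,0)$. On $\{s>0,\,t\ge 1\}$ we have $-t<0<s$, so the left-hand side equals $s$, while the right-hand side equals $t+s-1\ge s$ since $t\ge 1$. For the converse, assume $\max(-t,s)\le\max(t+s-1,0)$. If $s>0$, the left-hand side is $\ge s>0$, forcing $t+s-1>0$ and hence $s\le t+s-1$, i.e.\ $t\ge 1$. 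If $s\le 0$, a contradiction argument rules out $t<0$: otherwise $-t>0$, the left-hand side would be strictly positive, and the resulting inequality $-t\le t+s-1$ would give $t\ge(1-s)/2\ge 1/2>0$, contradicting $t<0$.

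Part (ii) is structurally symmetric and I would handle it in the same spirit, but carefully swapping the roles of the two terms and noting the key asymmetry: the hypograph uses $\II(\bullet\ge 0)$, not $\II(\bullet>0)$, so the boundary line $s=0$ belongs to the ``large'' piece $\{t\le 1\}$. Again for the forward direction, on $\{s\ge 0,\,t\le 1\}$ one splits on whether $t+s\le 1$ (in which case the left side is $0\le\max(-t,s)$ trivially since $s\ge 0$) or $t+s>1$ (in which case $s>1-t\ge 0$ and $\max(-t,s)\ge s\ge t+s-1$); on $\{s<0,\,t\le 0\}$ we have $t+s-1<0$ so the left side is $0$, while the right side is $\ge -t\ge 0$. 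Conversely, if $s\ge 0$ and $t>1$, then $\max(-t,s)=s$ while $\max(t+s-1,0)=t+s-1>s$, a contradiction; and if $s<0$ with $t>0$, both $-t$ and $s$ are negative so the right side is negative while the left side is nonnegative, again a contradiction.

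There is no real obstacle here; the entire argument is a finite, boundary-sensitive case split. The one point that requires care, and that I would flag in the write-up, is the treatment of the line $s=0$, since this is precisely where $\II(\bullet>0)$ and $\II(\bullet\ge 0)$ disagree and where the difference between parts (i) and (ii) becomes visible. A brief remark that the two dc representations are obtained from one another by exchanging the two $\max$-terms (and correspondingly replacing ${\rm epi}$ by ${\rm hypo}$ and the strict inequality by a non-strict one) makes the duality between the two statements transparent and provides a useful sanity check on the formulas.
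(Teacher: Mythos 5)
Your proof is correct. The paper gives no proof of this lemma at all (it explicitly states that none is required and relies on Figures~\ref{fig:epi} and \ref{fig:hypo}), and your case analysis---writing each set as the union of the two pieces determined by the sign of $s$ and checking the dc inequality on each, with the boundary $s=0$ handled as you flag---is exactly the elementary verification the paper leaves implicit.
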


\begin{figure}[h]
	\centering
	\fbox{
		\begin{minipage}{.4\textwidth}
			\begin{center}
				\begin{tikzpicture}[scale = 0.8]
				
				\draw[gray!40, thin, step=0.5] (-2.5,-2) grid (2.5,2.5);
				\draw[thick,->] (-2.7,0) -- (2.7,0) node[right] {$s$};
				\draw[thick,->] (0,-2.2) -- (0,2.8) node[above] {$t$};
				
				\foreach \x in {-2,...,-1} \draw (\x,0.05) -- (\x,-0.05) node[below] {\tiny\x};
				\foreach \x in {1,...,2} \draw (\x,0.05) -- (\x,-0.05) node[below] {\tiny\x};
				\foreach \y in {-2,...,2} \draw (-0.05,\y) -- (0.05,\y) node[right] {\tiny\y};
				
				\draw[red, line width=0.5mm] (-2.5,0) -- (0,0);
				\draw[red, line width=0.5mm] (0.07,1) -- (2.5,1);
				\draw [red,thick, fill = white] (0,1) circle (0.7mm);
				\draw [red,fill=red] (0,0) circle (0.7mm);
				
				\fill[opacity=0.6,pattern=north east lines] (0,1) -- (2.5,1) -- (2.5,2.5) -- (0,2.5) -- cycle;
				\fill[opacity=0.6,pattern=north east lines] (-2.5,0) -- (0,0) -- (0,2.5) -- (-2.5,2.5) -- cycle;
				\end{tikzpicture}
				\caption{\small the region (shaded) for ${\rm epi}\,\II( \bullet > 0 )$}
				\label{fig:epi}
			\end{center}
		\end{minipage}
		\quad
		\begin{minipage}{.4\textwidth}
			\begin{center}
				\begin{tikzpicture}[scale = 0.8]
				
				\draw[gray!40, thin, step=0.5] (-2.5,-2) grid (2.5,2.5);
				\draw[thick] (-2.7,0) -- (-0.07,0);
				\draw[thick,->] (0.07,0) -- (2.7,0) node[right] {$s$};
				\draw[thick] (0,-2.2) -- (0,-0.07);
				\draw[thick,->] (0,0.07) -- (0,2.8) node[above] {$t$};
				
				\foreach \x in {-2,...,-1} \draw (\x,0.05) -- (\x,-0.05) node[below] {\tiny\x};
				\foreach \x in {1,...,2} \draw (\x,0.05) -- (\x,-0.05) node[below] {\tiny\x};
				\foreach \y in {-2,...,2} \draw (-0.05,\y) -- (0.05,\y) node[right] {\tiny\y};
				
				\draw[red, line width=0.5mm] (-2.5,0) -- (-0.07,0);
				\draw[red, line width=0.5mm] (0,1) -- (2.5,1);
				\draw [red,fill=red] (0,1) circle (0.7mm);
				\draw [red,thick,fill = white] (0,0) circle (0.7mm);
				
				\fill[pattern=north east lines,opacity=0.6] (0,1) -- (2.5,1) -- (2.5,-2) -- (0,-2) -- cycle;
				\fill[pattern=north east lines,opacity=0.6] (-2.5,0) -- (0,0) -- (0,-2) -- (-2.5,-2) -- cycle;
				\end{tikzpicture}
				\caption{\small the region (shaded) for ${\rm hypo}\,\II( \bullet \geq 0 )$}
				\label{fig:hypo}
			\end{center}
		\end{minipage}
	}
\end{figure}

Denoting $\calZ^- \triangleq \max(-\calZ, 0)$ and $\calZ^+ \triangleq \max(\calZ, 0)$, we
assume that
\[
\E\left[ \, \calZ^+ \, \displaystyle{
\frac{\II( A \, f(X;\theta) = 0 )}{\pi(A | X)}
} \, \right] = 0.
\]
Under this assumption, problem \eqref{eq:complete EV-formulation} is equivalent to
\begin{equation} \label{eq:complete EV-formulation 2}
\displaystyle{
	\operatornamewithlimits{\mbox{minimize}}_{ \beta \in \mathbb{R}^{p}, \alpha \in \calF}
} \, \left\{ \begin{array}{l}
\E\left[ \, \calZ^- \, \displaystyle{
	\frac{\II( A \, f(X;\theta) > 0 )}{\pi(A | X)}
} \, \right] - \E\left[ \, \calZ^+ \, \displaystyle{
	\frac{\II( A \, f(X;\theta) \geq 0 )}{\pi(A | X)}
} \, \right]\\ [0.2in]
\displaystyle{
} \, +\E\left[ \, \left( \, \underbrace{{\cal Z} - \alpha(X) - u( {\cal Z} - \alpha(X))}_{\mbox{nonnegative}} \, \right) \, \displaystyle{
	\frac{\II(A \, f(X;\theta) > 0 )}{\pi(A | X)}
} \, \right]
\end{array} \right\}.
\end{equation}
For further consideration, we take $\alpha(X)$ to be a parameterized family of affine functions
$\{ b^{\, T} X + \beta_0 = w^{\, T} \wh{X} \}$ where $w \triangleq \left( \begin{array}{c}
b \\
b_0
\end{array} \right)$ is the parameter is be estimated. {The use of affine functions to approximate $\alpha^\ast(X)$ is based on both modeling and computational perspectives. The affine functions are easy for interpretation, but may suffer from model misspecification. The linear assumption can be relaxed by using kernel trick in machine learning. The corresponding computation will be more involved.}
%
We approximate the expectation in \eqref{eq:complete EV-formulation 2} by the sample average that  is
based on the available data $\{ ( X^{\,i},A_i,{\cal Z}_{\, i} ) \}_{i=1}^N$. In order to compute a sparse solution
that can avoid model overfitting, we add sparsity surrogate functions \cite{ahn2017difference} $P_b$ and $P_{\theta}$
on the parameters $w$ and $\beta$ in the
covariate function $\alpha(X)$ and the function $f(X;\theta)$, respectively,
each weighted by the positive scalars $\lambda_b^N$ and $\lambda_{\beta}^N$. The empirical problem is then given by
{\small \begin{equation} \label{eq:OCE binaryoptimization problem}
\begin{array}{l}
\displaystyle\operatornamewithlimits{minimize}_{\substack{\beta \in \mathbb{R}^{p}\\ w \triangleq ( b,b_0 ) \in S}}
\left\{ \begin{array}{l}
\lambda_a^N \, P_b(b) + \lambda_{\beta}^N \, P_{\beta}(\beta) + \displaystyle{
	\frac{1}{N}
} \, \displaystyle{
	\sum_{i=1}^N
} \, {\cal Z}^-_i \,  \displaystyle{
	\frac{\II( A_{\,i} \, (\beta^{\, T} X^{\,i} \pm 1)  >0 )}{\pi(A_{\,i} \,|\, X^{\,i})}
} -\\[0.2in]

 \,  \displaystyle{
	\frac{1}{|\calN_+|}
} \,\displaystyle{
	\sum_{i \in \calN_+}
} \, {\cal Z}^+_i \,  \displaystyle{
	\frac{\II( A_{\,i} \, (\beta^{\, T} X^{\,i} \pm 1)  \geq 0 )}{\pi(A_{\,i} \,|\, X^{\,i})}
} +\\ [0.2in]

\displaystyle{
	 \frac{1}{N}
} \, \displaystyle{
	\sum_{i=1}^N
} \, \left[ \, {\cal Z}_i - w^{\, T} \wh{X}^{\, i} - u( {\cal Z}_i - w^{\, T} \wh{X}^{\, i} ) \, \right] \, \displaystyle{
	\frac{\II( A_{\,i} \, (\beta^{\, T} X^{\,i} \pm 1)  > 0 )}{\pi(A_{\,i} \,|\, X^{\,i})}
}
\end{array} \right\},
\end{array}
\end{equation}}
where $\calN_+ \,\triangleq\, \{\,1\leq j\leq N \ | \ \calZ_j > 0 \,\}$ and $S$ is a closed convex set.
[In principle, we may add constraints to the parameter $\beta$ also but refrain from doing this as it does
not add value to the methodology.]
Based on Lemma \ref{lemma:epi representation}, the above problem can be further written as
{\small \begin{equation} \label{eq:OCE optimization problem 3}
\begin{array}{l}
\mbox{minimize over $z \, \triangleq \, ( w,\beta,\sigma^\pm )$; \, $\beta \, \in \, \mathbb{R}^{p}$,
and $w \, \triangleq \, ( b,b_0 ) \, \in \, S$} \\ [0.1in]
\varphi(z) \, \triangleq \, \left\{ \begin{array}{l}
\lambda_a^N \, P_b(b) + \lambda_{\beta}^N \, P_{\beta}(\beta) + \displaystyle{
	\frac{1}{N}
} \, \displaystyle{
	\sum_{i=1}^N
} \,   \displaystyle{
	\frac{{\cal Z}^-_i \sigma^-_i}{\pi(A_{\,i} \,|\, X^{\,i})}
} -  \displaystyle{
	\frac{1}{|\calN_+|}
} \displaystyle{
	\sum_{j \in \calN_+}
} \, \displaystyle{
	\frac{{\cal Z}^+_j \sigma^+_j}{\pi(A_{\,j} \,|\, X^{\,j})}
} \\ [0.2in]
\displaystyle{
	\frac{1}{N}
} \, \displaystyle{
	\sum_{i=1}^N
} \, \underbrace{\left[ \, {\cal Z}_i - w^{\, T} \wh{X}^{\, i} - u( {\cal Z}_i - w^{\, T} \wh{X}^{\, i} ) \, \right] \,
\displaystyle{
\frac{\sigma_i^-}{\pi(A_{\,i} \,|\, X^{\,i})}
}}_{\mbox{nonconvex}}
\end{array} \right\}\\ [0.5in]
\mbox{subject to} \\ [5pt]
\max(-\sigma_i^-\, , \,A_{\,i} \, (\beta^{\, T} X^{\,i} \pm 1)) - \max(\sigma_i^- + \, A_{\,i}  (\beta^{\, T} X^{\,i} \pm 1) - 1,\, 0) \, \leq \, 0, \;
1 \leq i \leq N \\ [0.1in]
\max(\sigma_{\,j} ^+ +  A_{\,j}  \, (\beta^{\, T} X^j \pm 1)-1) - \max(-\sigma_{\,j} ^+\,,\, A_{\,j}  \, (\beta^{\, T} X^j \pm 1)) \leq 0, \epc j \in \calN_+,
\end{array}
\end{equation}
}
where the constraints are of the difference-of-convex, piecewise affine type.
Denote $t_{\,i} \triangleq {\cal Z}_i - w^{\, T} \wh{X}^{\, i}$ for any $i = 1,\cdots, N$.
The last term in the objective function $\varphi$ can be further written as
\[ \begin{array}{ll}
& \left[ \, t_{\,i} - u(t_{\,i}) \, \right] \, \displaystyle{
	\frac{\sigma_i^-}{\pi(A_{\,i} \,|\, X^{\,i})}
} \\[0.1in]
 =  &  \displaystyle{
	\frac{1}{2 \, \pi(A_{\,i} \,|\, X^{\,i})}
} \,
\left\{ \, \left[ \, t_{\,i} - u(t_{\,i}) + \sigma_i^- \, \right]^2 - (\sigma_i^{-})^2 - [\,t_{\,i} - u(t_{\,i})\,]^2 \, \right\}.
\end{array} \]
Since $t_{\,i} - u(t_{\,i})\geq 0$ and $\sigma_{i}^-\geq 0$, the terms $\left[ \, t_{\,i} - u(t_{\,i}) + \sigma_i^- \, \right]^2$
and $[\,t_{\,i} - u(t_{\,i})\,]^2$ are convex.  Hence each product $\left[ \, t_{\,i} - u(t_{\,i}) \, \right] \, \displaystyle{
	\frac{\sigma_i^-}{\pi(A_{\,i} \,|\, X^{\,i})}
}$ is the difference of convex functions.

\gap

Suppose that the utility function and sparsity surrogate functions are as follows:
\begin{equation}\label{eq:utility and sparsity}
\begin{array}{rll}
u(t) &  = & \xi_1 \, \max( 0,t ) - \xi_2 \, \max( 0,-t ), \epc  \mbox{where}\; 0 \, \leq \, \xi_1 \, < \, 1 \, < \, \xi_2\,; \\[0.1in]
P_b(b)& = & \displaystyle{
	\sum_{i=1}^p
} \, \left[ \, \phi_i^b | \, b_{\,i} \, | - \rho_i^b(b_i) \, \right] , \epc \phi_i^b \, > \, 0, \ i \, = \, 1, \cdots, p\,; \\ [0.2in]
P_\beta(\beta) & = & \displaystyle{
	\sum_{i=1}^p
} \, \left[ \, \phi_i^\beta | \, \beta_{\,i} \, | - \rho_i^\beta(\beta_i) \, \right] , \epc \phi_i^\beta \, > \, 0, \ i \, = \, 1, \cdots, p,
\end{array}
\end{equation}
where $\phi_i^b$ and $\phi_i^{\beta}$ are given constants and $\rho_i^b$ and $\rho_i^{\beta}$ are convex differentiable
functions \cite{ahn2017difference}. We then have
\[
\begin{aligned}
\left[ \, t_{\,i} - u(t_{\,i}) \, \right] \, \displaystyle{
	\sigma_i^-
}
\, =\,  \displaystyle \frac{1}{2} \, \bigg\{\,\underbrace{(1-\xi_1)\left[ \, \max(0,t_i) + \sigma_i^- \, \right]^2 + (1+\xi_2)\left[ \, \max(0,-t_i) + \sigma_i^- \, \right]^2}_{\mbox{convex}} \\[0.05in]
 \ - \underbrace{\left[\,(2-\xi_1+\xi_2)(\sigma_i^-)^2 - (1-\xi_1)\left[\,\max(0,t_i)\,\right]^2 - (1+\xi_2)\left[\,\max(0,-t_i)\,\right]^2\,\right]}_{\mbox{convex and continuously differentiable}}\,\bigg\}.
\end{aligned}
\]
Therefore, under the above setting,
the objective function $\varphi$ is the difference of two convex functions, $\varphi_1 - \varphi_2$, with $\varphi_2$ being continuously differentiable.
In the next section, we present a dc algorithm for solving such a problem.


\section{Solving a Piecewise Affine Constrained DC Program}\label{sec:dca}

We consider problem \eqref{eq:OCE optimization problem 3} cast in the following general form:
\begin{equation}\label{eq: dc1}
\begin{array}{ll}
\displaystyle\operatornamewithlimits{minimize}_{x\in X} \epc  f(x) \, - \, g(x)\\[0.15in]
\mbox{subject to} \\[0.1in]
\epc \displaystyle\max_{1\, \leq \, j \, \leq \, J_{1i}}((a^{\,ij})^Tx + \alpha_{\,ij}) \, - \,
 \max_{1 \, \leq \,j \, \leq \,J_{2i}}((b^{\,ij})^Tx + \beta_{\,ij})\, \leq \, 0, \epc i = 1, \ldots, m,
\end{array}
\end{equation}
where $f:\mathbb{R}^n \to \mathbb{R}$ is a convex function, $g:\mathbb{R}^n\to \mathbb{R}$ is a continuously differentiable convex function with
Lipschitz continuous gradient, each $a^{\,ij}$ and $b^{\,ij}$ are  $n$-dimensional vectors, each $\alpha_{\,ij}$ and $\beta_{\,ij}$ are scalars,
each $J_{1i}$ and $J_{2i}$ are positive integers, and $X$ is a polyhedral set.
Notice that for any $i = 1, \ldots, m$, it holds that
\[\begin{array}{ll}
& \displaystyle\max_{1 \, \leq \, j \, \leq \, J_{1i}}((a^{\,ij})^Tx + \alpha_{\,ij}) \, - \, \max_{1\, \leq \,j \, \leq \,
J_{2i}}((b^{\,ij})^Tx + \beta_{\,ij})\, \leq \, 0 \\[0.2in]
\Longleftrightarrow &  (a^{\,ij_1})^Tx + \alpha_{\,ij_1} - \displaystyle\max_{1\leq j \leq J_{2i}}((b^{\,ij})^Tx + \beta_{\,ij})\, \leq \, 0, \epc
\forall\; 1\leq j_1 \leq J_{1i} \\[0.2in]
\Longleftrightarrow &   \displaystyle\max_{1\leq j_2 \leq J_{2i}}\left(\,(b^{\,ij_2} - a^{\,ij_1})^Tx + (\beta_{\,ij_2}- \alpha_{\,ij_1})\,\right)\, \geq \, 0,
\epc \forall\; 1\leq j_1 \leq J_{1i}.
\end{array}
\]
The above equivalences indicate that by properly redefining $(b^{\,ij}, \beta_{\,ij})$ and the value of $m$,
one can write any piecewise linear constrained dc program \eqref{eq: dc1}  as the following reverse convex constrained \cite{hillestad1980reverse} dc program:
\begin{equation}\label{eq: dc2}\begin{array}{ll}
\displaystyle\operatornamewithlimits{minimize}_{x\in X} \epc & h(x)\,\triangleq \, f(x) \, - \, g(x)\\[0.15in]
\mbox{subject to} \epc &   \, \displaystyle\max_{1\, \leq \, j \, \leq \,J_{\,i}}(\,(\,b^{\,ij}\,)^Tx + \beta_{\,ij}\,)\, \geq \, 0, \epc  i = 1, \ldots, m.
\end{array}
\end{equation}
Denote the feasible set of the problem \eqref{eq: dc2} as
\[
F\,\triangleq \,\left\{x\in X\mid \displaystyle{
\max_{1\, \leq \, j \, \leq \,J_{\,i}}
} \, (\,(\,b^{\,ij}\,)^Tx + \beta_{\,ij}\,) \, \geq \, 0, \epc  i = 1, \ldots, m\,\right\}.
\]
For any $x\in \mathbb{R}^n$, we also denote
\[
{\cal I}(x) \,\triangleq\, \left\{\,1\leq i\leq m\mid \, \displaystyle\max_{1\leq j \leq J_{\,i}}(\,(\,b^{\,ij}\,)^Tx + \beta_{\,ij}\,)\, =\, 0\,\right\}
\]
and
\[
\mathcal{A}_{\,i}(x) \,\triangleq\, \displaystyle{
\operatornamewithlimits{argmax}_{1\, \leq \, j \, \leq \, J_{\,i}}
} \, \left\{\,(\,b^{\,ij}\,)^Tx + \beta_{\,ij}\,\right\}, \epc i = 1, \ldots, m.
\]
We say that $\bar{x}\in X$ is a B(ouligand)-stationary point \cite{pang2007partially} of the problem \eqref{eq: dc2} if
\[
h^{\prime}(\bar{x};d)\,\triangleq\, \operatornamewithlimits{lim}_{\tau\downarrow 0 }
\, \displaystyle{
	\frac{h(\bar{x} + \tau d) - h(\bar{x})}{\tau}
}  = f^{\prime}(\bar{x};d) - g^{\prime}(\bar{x};d)\,\geq\, 0, \epc \forall\; d\in \mathcal{T}_{\,B\,}(\bar{x};F),
\]
where $\mathcal{T}_{\,B\,}(\bar{x};F)$ is the Bouligand tangent cone of $F$ at $\bar{x}\in F$, i.e., (see, e.g.,~\cite[Proposition~3]{pang2016computing}),
\[\begin{array}{rl}
\mathcal{T}_{\,B\,}(\bar{x};F) \, \triangleq & \left\{\,d\in \mathbb{R}^n\, \mid \,
d \, = \, \displaystyle{
\lim_{\nu\to\infty}
} \, \displaystyle{
\frac{(x^{\,\nu} - \bar{x} )}{\tau_{\nu}},
} \ \mbox{where $F\ni x^{\,\nu}\to \bar{x}$ and $\tau_{\nu}\downarrow 0$}  \right\}\\[0.2in]
= & \left\{\,d\in \mathcal{T}_{\,B\,}(\bar{x};X) \, \mid \,  \displaystyle\max_{j \in \mathcal{A}_{\,i}(\bar{x})}\, (b^{\,ij})^{\,T}d\geq \,0, \;\forall\;
i \in {\cal I}(\bar{x})\,   \right\}  \\ [0.2in]
= & \displaystyle{
\bigcap_{i \in {\cal I}(\bar{x})}
} \, \displaystyle{
\bigcup_{j \in {\cal A}_i(\bar{x})}
} \, \left\{\,d\in \mathcal{T}_{\,B\,}(\bar{x};X) \, \mid \, (b^{\,ij})^{\,T}d \, \geq \,0 \, \right\}.
\end{array}
\]
[Since $X$ is assumed to be polyhedral, $\mathcal{T}_{\,B\,}(\bar{x};X)$ is a polyhedral cone.]
A weaker concept than B-stationarity is that of weak B-stationarity, which pertains to a feasible solution $\bar{x}\in F$ such that $h^{\prime}(\bar{x};d)\geq 0$ for any $d\in \mathbb{R}^n$ satisfying
\[ \begin{array}{rl}
d\in \mathcal{T}^{\,\rm weak}_{\,B\,}(\bar{x};F) \,\triangleq &
\left\{ \, d\in \mathcal{T}_{\,B\,}(\bar{x};X) \, \mid \,  \displaystyle\min_{j \in \mathcal{A}_{\,i}(\bar{x})}\, (b^{\,ij})^{\,T}d\geq \,0,
\;\forall\; i \in {\cal I}(\bar{x})\,   \right\} \\ [0.2in]
= & \displaystyle{
\bigcap_{i \in {\cal I}(\bar{x})}
} \, \displaystyle{
\bigcap_{j \in {\cal A}_i(\bar{x})}
} \, \left\{\,d\in \mathcal{T}_{\,B\,}(\bar{x};X) \, \mid \, (b^{\,ij})^{\,T}d \, \geq \,0 \, \right\}.
\end{array}
\]
Unlike $\mathcal{T}_{\,B\,}(\bar{x};F)$, which is not necessarily convex, $\mathcal{T}_{\,B\,}^{\, \rm weak}(\bar{x};F)$ is a polyhedral cone.
It is known from \cite[Chapter 2, Proposition 1.1(c) \& Exercise 9.10]{clarke1998nonsmooth} that
\[
\mathcal{T}_{C}(\bar{x}; F)\,\subseteq\, \mathcal{T}_{B}^{\,\rm weak}(\bar{x};F)\,\subseteq \,\mathcal{T}_{B}(\bar{x};F),
\]
where $\mathcal{T}_{\,C\,}(\bar{x}; F)$ denotes the Clarke tangent cone of $F\subseteq\mathbb{R}^n$ at $\bar{x}$,
i.e., $d\in \mathcal{T}_{\,C\,}(\bar{x}; F)$ if for every sequence $\{x^{\,i}\}\subseteq S$ converging to $\bar{x}$ and positive
scalar sequence $\{t_{\,i}\}$ decreasing to $0$, there exists a sequence $\{d^{\,i}\}\subseteq \mathbb{R}^n$ converging to $d$ such that
$x^{\,i} + t_{\,i}\,d^{\,i} \in F$ for all $i$ \cite[Chapter 2, Proposition 5.2]{clarke1998nonsmooth}.

In order to better understand the above two stationarity concepts in the context of the piecewise polyhedral structure of the feasible set $F$ and to
motivate the algorithm to be presented afterward for solving the problem \eqref{eq: dc2}, we first introduce a further stationarity concept, which we call
A-stationarity (A for Algorithm).  Specifically, we note that $F$ is the union of finitely many polyhedra:
\[
F \, = \, \displaystyle{
\bigcup_{(j_1, \cdots, j_m)}
} \, \left\{ \, x \, \in \, X \, \mid \, (\, b^{\,ij_i} \,)^Tx + \beta_{ij_i} \, \geq \, 0, \epc  i = 1, \ldots, m\,\right\},
\]
where the union ranges over all tuples $\{ j_i \}_{i=1}^m$ with each $j_i \in \{ 1, \cdots, J_i \}$ for all $i$.  Given a vector $\bar{x} \in F$,
let ${\cal J}(\bar{x})$ be the family of such tuples such that $j_i \in {\cal A}_i(\bar{x})$ for all $i = 1, \cdots, m$.  We say that $\bar{x} \in F$
is {\sl A-stationary} if there exists a tuple $\bar{j}(\bar{x}) = \{ \, \bar{j}_i \, \}_{i=1}^m \in {\cal J}(\bar{x})$ such that
\[
h^{\, \prime}(\bar{x};d) \, \geq \, 0, \ \forall \, d \, \in {\cal T}_A^{\bar{j}(\bar{x})}(\bar{x};F) \, \triangleq \,
\left\{ \, d \, \in \, {\cal T}_{\,B\,}(\bar{x};X)
\, \mid \, ( \, b^{i \bar{j}_i} \, )^T d \, \geq \, 0, \ \forall \, i \, \in \, {\cal I}(\bar{x}) \, \right\}.
\]

\begin{lemma}\label{lemma:weak B}
Let $\bar{x}\in F$ be given.  Consider the following statements all pertaining to the problem \eqref{eq: dc2}:\\[0.05in]
(a) $\bar{x}$ is B-stationary;\\[0.05in]
(b) $\bar{x}$ is A-stationary;\\[0.05in]
(c) there exists a tuple $\bar{j}(\bar{x}) = \{ \, \bar{j}_i \, \}_{i=1}^m \in {\cal J}(\bar{x})$ such that
\begin{equation} \label{eq:A-stationary in terms of min}
\bar{x} \, \in \, \displaystyle{
\operatornamewithlimits{\mbox{argmin}}_{x\in X}
} \, \left\{ \, f(x) - [\,g(\bar{x}) + \nabla g(\bar{x})^T(x-\bar{x})\,]\,\mid \, (b^{\,i\,\bar{j}_{\,i}})^Tx +
\beta_{\,i\,\bar{j}_{\,i}}\geq 0, \; i \in {\cal I}(\bar{x}) \, \right\};
\end{equation}
(d) there exists a tuple $\bar{j}(\bar{x}) = \{ \, \bar{j}_i \, \}_{i=1}^m \in {\cal J}(\bar{x})$ such that
\[
\bar{x}  \in \, \displaystyle{
\operatornamewithlimits{\mbox{argmin}}_{x\in X}
}  \left\{ \, f(x) - [\,g(\bar{x}) + \nabla g(\bar{x})^T(x-\bar{x})\,]\,\mid \, (b^{\,i\,\bar{j}_{\,i}})^Tx +
\beta_{\,i\,\bar{j}_{\,i}}\geq 0, \; i = 1, \cdots, m \, \right\};
\]
(e) $\bar{x}$ is weak B-stationary.\\
It holds that (a) $\Rightarrow$ (b) $\Leftrightarrow$ (c) $\Leftrightarrow$ (d) $\Rightarrow$ (e).
\end{lemma}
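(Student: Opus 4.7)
The strategy is to express every stationarity concept as the nonnegativity of the directional derivative $h^{\,\prime}(\bar x;d) = f^{\,\prime}(\bar x;d) - \nabla g(\bar x)^{\,T}d$ on an appropriate cone at $\bar x$, and then, for the minimization statements (c) and (d), to recognize their feasible sets as convex polyhedra whose tangent cone at $\bar x$ coincides with $\mathcal{T}_A^{\bar j(\bar x)}(\bar x;F)$. The pivotal observation is that for any tuple $\bar j(\bar x)\in \mathcal{J}(\bar x)$, each constraint $(b^{\,i\bar j_i})^{\,T}x + \beta_{\,i\bar j_i}\geq 0$ with $i\in\mathcal{I}(\bar x)$ is active at $\bar x$ (since $\bar j_i\in \mathcal{A}_i(\bar x)$ and the relevant max equals $0$), while the analogous constraints for $i\notin\mathcal{I}(\bar x)$ hold strictly at $\bar x$.

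First I would record the cone inclusions
\[
\mathcal{T}_B^{\,\rm weak}(\bar x;F) \, \subseteq \, \mathcal{T}_A^{\bar j(\bar x)}(\bar x;F) \, \subseteq \, \mathcal{T}_B(\bar x;F),
\]
valid for every tuple $\bar j(\bar x)\in\mathcal{J}(\bar x)$: the right inclusion is immediate because $(b^{\,i\bar j_i})^{\,T}d\geq 0$ forces the max over $\mathcal{A}_i(\bar x)$ to be nonnegative, and the left inclusion is immediate because the weak cone demands $(b^{\,ij})^{\,T}d\geq 0$ for \emph{all} $j\in\mathcal{A}_i(\bar x)$, in particular for $j=\bar j_i$. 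From these, (a)$\Rightarrow$(b) is obtained by picking any tuple $\bar j(\bar x)$, since B-stationarity then automatically delivers nonnegativity on the smaller cone $\mathcal{T}_A^{\bar j(\bar x)}$. Once (d)$\Leftrightarrow$(b) is in hand, (d)$\Rightarrow$(e) is likewise immediate because $\mathcal{T}_B^{\,\rm weak}(\bar x;F)$ sits inside the certifying $\mathcal{T}_A^{\bar j(\bar x)}(\bar x;F)$.

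The core of the argument is (b)$\Leftrightarrow$(c)$\Leftrightarrow$(d). For (b)$\Leftrightarrow$(c), the set $P_{\bar j}\triangleq \{x\in X\mid (b^{\,i\bar j_i})^{\,T}x + \beta_{\,i\bar j_i}\geq 0,\; i\in\mathcal{I}(\bar x)\}$ is polyhedral and every one of its defining inequalities beyond those of $X$ is active at $\bar x$, so its polyhedral tangent cone at $\bar x$ is precisely $\mathcal{T}_A^{\bar j(\bar x)}(\bar x;F)$; since the objective $f(x)-[g(\bar x)+\nabla g(\bar x)^{\,T}(x-\bar x)]$ is convex with directional derivative $h^{\,\prime}(\bar x;d)$, the standard first-order characterization of minimizers of a convex function on a polyhedron identifies optimality in (c) with A-stationarity. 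For (c)$\Leftrightarrow$(d), the extra constraints indexed by $i\notin\mathcal{I}(\bar x)$ appearing in (d) are strictly satisfied at $\bar x$, hence do not enter the tangent cone there; the two convex programs therefore have identical first-order conditions at $\bar x$, and convexity upgrades this to the claimed equivalence of optimality.

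The only technical subtlety is in the (b)$\Leftrightarrow$(c) step: one must justify invoking the first-order optimality characterization for a convex function over a polyhedral set in terms of its tangent cone, and then cleanly identify that tangent cone with $\mathcal{T}_A^{\bar j(\bar x)}(\bar x;F)$ using the activity pattern above. After that, every remaining implication collapses to one of the cone inclusions in the display.
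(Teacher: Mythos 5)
Your proof is correct and follows essentially the same route as the paper: (a)$\Rightarrow$(b) and the passage to (e) via the cone inclusions $\mathcal{T}_{B}^{\,\rm weak}(\bar x;F)\subseteq \mathcal{T}_A^{\bar{j}(\bar x)}(\bar x;F)\subseteq \mathcal{T}_B(\bar x;F)$, and (b)$\Leftrightarrow$(c) via the first-order optimality characterization of the convex program over the polyhedron whose tangent cone at $\bar x$ is exactly $\mathcal{T}_A^{\bar{j}(\bar x)}(\bar x;F)$. The only divergence is in (c)$\Leftrightarrow$(d), where you observe that the extra constraints indexed by $i\notin\mathcal{I}(\bar x)$ are inactive at $\bar x$ and hence leave the tangent cone (and thus the first-order condition) unchanged, while the paper proves (d)$\Rightarrow$(c) by a direct convex-combination argument along the segment joining $\bar x$ to an arbitrary point feasible for (c); both arguments are standard and equally valid.
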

\begin{proof}
(a) $\Rightarrow$ (b).  This is because ${\cal T}_A^{\bar{j}(\bar{x})}(\bar{x};F) \subseteq {\cal T}_B(\bar{x};F).$

\gap

(b) $\Rightarrow$ (e).  This is because $\mathcal{T}^{\,\rm weak}_{\,B\,}(\bar{x};F) \subseteq {\cal T}_A^{\bar{j}(\bar{x})}(\bar{x};F)$.

\gap

(b) $\Leftrightarrow$ (c).  This is clear because the condition  $h^{\, \prime}(\bar{x};d) \geq 0$ for all
$d \in {\cal T}_A^{\bar{j}(\bar{x})}(\bar{x};F)$ is
exactly the first-order optimality condition of the convex program in (\ref{eq:A-stationary in terms of min}).

\gap

(c) $\Rightarrow$ (d).  This is clear because there are more constraints in the feasible region of the optimization problem in (d) than those
in (c).

\gap

(d) $\Rightarrow$ (c).   Let $x \in X$ satisfy $(b^{\,i\,\bar{j}_{\,i}})^Tx +
\beta_{\,i\,\bar{j}_{\,i}}\geq 0$ for all $i \in {\cal I}(\bar{x})$.  Since
$(b^{\,i\,\bar{j}_{\,i}})^Tx +
\beta_{\,i\,\bar{j}_{\,i}} > 0$ for all $i \not\in {\cal I}(\bar{x})$, it follows that for all $\tau > 0$ sufficiently small,
the vector $x^{\tau} \triangleq x + \tau ( \bar{x} - x )$ satisfies
$(b^{\,i\,\bar{j}_{\,i}})^Tx^{\tau} +
\beta_{\,i\,\bar{j}_{\,i}} \geq 0$ for all $i = 1, \cdots, m$.  Hence,
\[ \begin{array}{lll}
f(\bar{x}) - g(\bar{x}) & \leq & f(x^{\tau}) - \left[\,g(\bar{x}) + \nabla g(\bar{x})^T(x^{\tau} - \bar{x})\, \right] \epc \mbox{by (d)} \\ [0.1in]
& \leq & \tau \, \left[ \, f(\bar{x}) - g(\bar{x}) \, \right] +
( \, 1 - \tau \, ) \, \left[ \, f(x) - \left[ \,g(\bar{x}) + \nabla g(\bar{x})^T( \, x - \bar{x} \, ) \, \right] \,\right],
\end{array} \]
which yields
\[
f(\bar{x}) - g(\bar{x}) \, \leq \, f(x) - \left[ \,g(\bar{x}) + \nabla g(\bar{x})^T( \, x - \bar{x} \, ) \, \right],
\]
establishing (c).
\end{proof}

\gap

In the following, we propose a dc algorithm to compute an A-stationary point of \eqref{eq: dc2}. The algorithm takes
advantage of the reverse convex constraints of the problem in that once initiated at a feasible vector $x^0 \in F$, the algorithm
generates a feasible sequence $\{ x^{\nu} \} \subset F$; see Step~1 below.

\noindent\makebox[\linewidth]{\rule{\textwidth}{1pt}}
\noindent
A dc algorithm for solving the  reverse convex constrained dc program \eqref{eq: dc2}.

\noindent\makebox[\linewidth]{\rule{\textwidth}{1pt}}

\noindent
{\bf Initialization.}  Given are a scalar $c > 0$, an initial point $x^{\, 0} \in F$.

\gap

{\bf Step 1.}  For each $i = 1, \cdots, m$, choose an index $j_{\,i}^{\,\nu}\in \mathcal{A}_{\,i\,}(x^{\,\nu})$.  Let
$x^{\,\nu+1}$ be the unique optimal solution of the convex program:
\begin{equation}\label{eq:dc sub}
\begin{array}{ll}
\displaystyle{
\operatornamewithlimits{\mbox{minimize}}_{x \in X}
} & \wh{h}_{\,c\,}(x;x^{\,\nu}) \, \triangleq  \,
f(x) - [ \, g(x^{\,\nu}) + (\nabla g(x^{\,\nu}))^{\,T}(x-x^{\,\nu})\,] \\[0.1in]
&\epc \epc \quad \quad \quad \quad +\underbrace{\displaystyle\frac{c}{2}\,\|x - x^{\,\nu}\|^2}_{\mbox{\small proximal regularization}} \\[0.1in]
\mbox{subject to} & (b^{\,i\,j_{\,i}^{\,\nu}})^{\,T} x+\beta_{\,i\,j_{\,i}^{\,\nu}} \, \geq \, 0,\epc i = 1,\ldots, m.
\end{array}
\end{equation}
{\bf Step 2.} If $x^{\, \nu+1}$ satisfies a prescribed stopping rule, terminate; otherwise, return to Step 1 with $\nu$ replaced by $\nu+1$.
\hfill $\Box$

\noindent\makebox[\linewidth]{\rule{\textwidth}{1pt}}

An enhanced version of the above algorithm that requires solving multiple subproblems for all indices $j_{\,i}^{\,\nu}$ in a so-called
``$\varepsilon$-argmax set'' has been suggested in \cite{pang2016computing}.  For this enhanced algorithm, it can be shown that every
accumulation point, if exists, of the generated sequence is a B-stationary point.  Although there are theoretical benefits of such an
algorithm, it may not be efficient when applied to the empirical CDE problem \eqref{eq:OCE optimization problem 3}, because the number
of reverse convex inequalities in the constraint set is proportional to the number of samples, making  the ``$\varepsilon$-argmax set''
potentially very large, thus potentially many subprograms need to be solved at every iteration.  There is also a probabilistic variant
of the enhanced algorithm that also solves only one convex subprogram of the same type as (\ref{eq:dc sub}).  The only difference from
the presented deterministic algorithm is that the tuple $\{ \, \bar{j}_i^{\nu} \, \}_{i=1}^m$ is chosen from the $\varepsilon$-argmax sets
randomly with positive probabilities.  Almost sure convergence of the probabilistic algorithm to a B-stationary point can be established.
Since the above (deterministic) algorithm has not been formally introduced in the literature, we provide below a (subsequential) convergence
result to an A-stationary solution of the problem \eqref{eq: dc2}.

It is worth mentioning that each $x^{\nu+1}$ is feasible to the subprogram (\ref{eq:dc sub}) at iteration $\nu+1$ because
\[
(b^{\,i\,j_{\,i}^{\,\nu+1}})^{\,T} x^{\nu+1} +\beta_{\,i\,j_{\,i}^{\,\nu+1}} \, = \,
\displaystyle{
\max_{1 \leq j \leq J_i}
} \, (\,(\,b^{\,ij}\,)^Tx^{\nu+1} + \beta_{\,ij}\,) \, \geq \, (b^{\,i\,j_{\,i}^{\,\nu}})^{\,T} x^{\nu+1} +\beta_{\,i\,j_{\,i}^{\,\nu}} \geq 0.
\]
This inequality also shows that $x^{\nu+1} \in F$ for all $\nu$.  The following theorem asserts the subsequential convergence of the
sequence generated by the above dc algorithm to an A-stationary point of problem \eqref{eq: dc2}.

\begin{theorem}\label{thm:subsequential convergence}
Suppose that $h$ is bounded below on the polyhedral set $X$. Then any accumulation point $x^{\,\infty}$ of the sequence $\left\{ x^{\, \nu} \right\}$
generated by the dc algorithm, if it exists, is an A-stationary point of \eqref{eq: dc2}.
\end{theorem}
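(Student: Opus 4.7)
My plan is a three-stage convergence argument: establish sufficient descent of $h$ along the iterates, extract a convergent subsequence with a stable active-index tuple, and pass to the limit in the subproblem's first-order condition in order to verify statement (d) of Lemma \ref{lemma:weak B}.

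First, I would prove the descent inequality $h(x^{\,\nu+1}) \leq h(x^{\,\nu}) - \tfrac{c}{2}\|x^{\,\nu+1}-x^{\,\nu}\|^2$. The key observation is that $x^{\,\nu}$ is itself feasible for the subproblem \eqref{eq:dc sub}, since $j_i^{\,\nu}\in \mathcal{A}_{\,i\,}(x^{\,\nu})$ and $x^{\,\nu}\in F$ imply $(b^{\,i\,j_i^{\,\nu}})^{\,T} x^{\,\nu}+\beta_{\,i\,j_i^{\,\nu}}\geq 0$; optimality of $x^{\,\nu+1}$ against this admissible competitor yields
\[
f(x^{\,\nu+1}) - \nabla g(x^{\,\nu})^{\,T}(x^{\,\nu+1}-x^{\,\nu}) + \tfrac{c}{2}\|x^{\,\nu+1}-x^{\,\nu}\|^2 \, \leq \, f(x^{\,\nu}),
\]
and combining with the subgradient inequality $-g(x^{\,\nu+1})\leq -g(x^{\,\nu})-\nabla g(x^{\,\nu})^{\,T}(x^{\,\nu+1}-x^{\,\nu})$ produces the claimed descent. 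Since $h$ is bounded below on $X$, telescoping gives $\sum_{\nu}\|x^{\,\nu+1}-x^{\,\nu}\|^2<\infty$, whence $\|x^{\,\nu+1}-x^{\,\nu}\|\to 0$.

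Given any subsequence $x^{\,\nu_k}\to x^{\,\infty}$, I would use the finiteness of each index set $\{1,\ldots,J_{\,i}\}$ to extract a further subsequence along which the tuple $(j_1^{\,\nu_k},\ldots,j_m^{\,\nu_k})$ is constant, equal to some $\bar{j}=(\bar{j}_1,\ldots,\bar{j}_m)$. Passing to the limit in
\[
(b^{\,i\bar{j}_{\,i}})^{\,T} x^{\,\nu_k} + \beta_{\,i\bar{j}_{\,i}} \, = \, \max_{1 \leq j \leq J_{\,i}}\left\{(b^{\,ij})^{\,T} x^{\,\nu_k} + \beta_{\,ij}\right\}
\]
shows both that $x^{\,\infty}\in F$ and that $\bar{j}_{\,i}\in\mathcal{A}_{\,i\,}(x^{\,\infty})$ for every $i$, so $\bar{j}\in\mathcal{J}(x^{\,\infty})$. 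Next, for any $x\in X$ with $(b^{\,i\bar{j}_{\,i}})^{\,T} x + \beta_{\,i\bar{j}_{\,i}}\geq 0$ for all $i$, the constancy of the tuple makes $x$ feasible in every subproblem along the subsequence, giving $\wh{h}_{\,c\,}(x^{\,\nu_k+1};x^{\,\nu_k})\leq \wh{h}_{\,c\,}(x;x^{\,\nu_k})$. Using $x^{\,\nu_k+1}\to x^{\,\infty}$ together with continuity of $f$, $g$, and $\nabla g$, the limit becomes
\[
f(x^{\,\infty}) \, \leq \, f(x) - \nabla g(x^{\,\infty})^{\,T}(x-x^{\,\infty}) + \tfrac{c}{2}\|x-x^{\,\infty}\|^2.
\]

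To kill the residual proximal term, I apply this inequality to $x_{\tau}\triangleq x^{\,\infty}+\tau(x-x^{\,\infty})$ for small $\tau>0$, which stays feasible by convexity of the tuple-$\bar{j}$ polyhedron; dividing by $\tau$ and letting $\tau\downarrow 0$ produces the directional bound $f^{\prime}(x^{\,\infty};x-x^{\,\infty})\geq \nabla g(x^{\,\infty})^{\,T}(x-x^{\,\infty})$, and convexity of $f$ then upgrades this to $f(x)-\nabla g(x^{\,\infty})^{\,T}(x-x^{\,\infty}) \geq f(x^{\,\infty})$. Rearranged, this is exactly statement (d) of Lemma \ref{lemma:weak B}, and the implication (d)$\Rightarrow$(b) there certifies A-stationarity of $x^{\,\infty}$. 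I expect this last stage to be the main obstacle: the proximal regularization does not vanish under a naive limit, and without the constant-tuple subsequence one could not use a common test point $x$ across iterations; the $x_{\tau}$ homotopy combined with convexity of $f$ is what rescues the argument.
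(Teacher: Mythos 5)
Your proof is correct and follows essentially the same route as the paper's: the descent estimate via convexity of $g$ and optimality of $x^{\nu+1}$ against the feasible competitor $x^{\nu}$, the conclusion $\|x^{\nu+1}-x^{\nu}\|\to 0$ from boundedness below, extraction of a subsequence along which the index tuple is constant, and passage to the limit in the subproblem optimality inequality followed by an appeal to Lemma \ref{lemma:weak B}. The one place you are more explicit than the paper is the elimination of the residual proximal term $\frac{c}{2}\|x-x^{\,\infty}\|^2$ via the homotopy $x_{\tau}=x^{\,\infty}+\tau(x-x^{\,\infty})$ together with convexity of $f$; the paper's proof writes the limiting inequality without that term, leaving this (genuinely needed) step implicit.
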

\begin{proof}
	The  sequence of function values $\{h(x^{\,\nu})\}$ decreases since
	\[\begin{array}{rl}
	& h(x^{\,\nu+1}) + \displaystyle\frac{c}{2}\|x^{\,\nu+1} - x^{\,\nu}\|^2 \\[0.1in]
	 \leq & \wh{h}_{\,c\,}(x^{\,\nu+1}; x^{\,\nu}) \epc \mbox{(by the convexity of $g$)}\\[0.1in]
	\leq & h(x^{\,\nu}) \ \mbox{(by the optimality of $x^{\,\nu+1}$ and the feasibility of $x^{\,\nu}$  to \eqref{eq:dc sub})}.
	\end{array}
	\]
	Since $h$ is bounded below on $X$, we may derive that $\displaystyle\lim_{\nu\to \infty}\|x^{\,\nu+1} - x^{\,\nu}\| = 0$.
	By the definition of the point $x^{\,\nu+1}$, we obtain that for all $x\in X$ satisfying $(b^{\,i\,j_{\,i}^{\,\nu}})^Tx + \beta_{\,i\,j_{\,i}^{\,\nu}}\geq 0$, $i = 1, \ldots, m$,
	\begin{equation}\label{ineq:dca}
	\begin{array}{l}
	f(x^{\,\nu+1}) - \left[\,g(x^{\,\nu}) + \nabla g(x^{\,\nu})^T(x^{\,\nu+1}-x^{\,\nu})\,\right] + \displaystyle\frac{c}{2}\,\|x^{\,\nu+1} - x^{\,\nu}\|^2 \\[0.15in]
	\epc \leq \, f(x) - \left[\,g(x^{\,\nu}) + \nabla g(x^{\,\nu})^T(x-x^{\,\nu})\,\right] + \displaystyle\frac{c}{2}\,\|x - x^{\,\nu}\|^2.
	\end{array}
		\end{equation}
Let $\{x^{\,\nu+1}\}_{\nu\in \kappa}$ be a subsequence of $\{x^{\,\nu}\}$ that converges to $x^{\,\infty}$.
Then $x^{\,\infty}\in F$. Since each $\mathcal{A}_{\,i\,}(x^{\,\nu})$ is finite, we may assume without loss of generality that the
selected $j_{\,i}^{\,\nu}\in \mathcal{A}_{\,i\,}(x^{\,\nu})$ are independent of $\nu$ for any $i= 1,\ldots, m$ on this subsequence,
i.e., there exists $\bar{j}_{\,i}$ such that $\bar{j}_{\,i} = j_{\,i}^{\,\nu}$ for all
$i = 1, \ldots, m$ and all $\nu \in \kappa$.  For all $x\in X$ satisfying $(b^{\,i\,\bar{j}_i})^Tx + \beta_{\,i\,\bar{j}_i}\geq 0$,
the inequality \eqref{ineq:dca} holds.  Taking limit of $\nu(\in \kappa)\to +\infty$, we obtain that
$\bar{j}_{\,i}\in \mathcal{A}_{\,i\,}(x^{\,\infty})$ for $i = 1, \ldots, m$,  and for all $x\in X$ satisfying $(b^{\,i\,\bar{j}_i})^Tx + \beta_{\,i\,\bar{j}_i}\geq 0$,
\[
f(x^{\,\infty}) - g(x^{\,\infty})  \,\leq \, f(x) - [\,g(x^{\,\infty}) + \nabla g(x^{\,\infty})^T(x-x^{\,\infty})\,],
\]
which, by Lemma \ref{lemma:weak B},  yields that $x^{\,\infty}$ is an A-stationary point of the problem \eqref{eq: dc2}.
\end{proof}

\subsection{Solving the subproblem of the dc algorithm}
\label{sec:QP sub}
Given $\bar{z} \triangleq (\bar{w}, \bar{\beta}, \bar{\sigma}^\pm)$ and a positive constant $c > 0$, the strongly convex objective of the subproblem of the dc algorithm in Step 1 for solving the problem \eqref{eq:OCE optimization problem 3} with $u$, $P_a$ and $P_b$ given in \eqref{eq:utility and sparsity} can be essentially written as
\[\small
\begin{array}{l}
\lambda_a^N \, \displaystyle{
	\sum_{i=1}^p
} \, \left[ \, \phi_i^a | \, a_{\,i} \, | - \displaystyle{
	\frac{d \rho_i^a(\bar{a}_i)}{da_i}
} \, ( \, a_i - \bar{a}_i \, ) \, \right] + \lambda_{\beta}^N \, \displaystyle{
	\sum_{i=1}^p
} \, \left[ \, \phi_i^{\beta} | \, \beta_{\,i} \, | - \displaystyle{
	\frac{d \rho_i^{\beta}(\bar{\beta}_i)}{d\beta_i}
} \, ( \, \beta_i - \bar{\beta}_i \, ) \, \right] +\\[0.2in] \displaystyle{
	\frac{1}{N}
} \, \displaystyle{
	\sum_{i=1}^N
} \,   \displaystyle{
	\frac{{\cal Z}^-_i \sigma^-_i}{\pi(A_{\,i} \,|\, X^{\,i})}
} -  \displaystyle{
	\frac{1}{|\calN_+|}
} \displaystyle{
	\sum_{i \in \calN_+}
} \, \displaystyle{
	\frac{{\cal Z}^+_i \sigma^+_i}{\pi(A_{\,i} \,|\, X^{\,i})}
}  + \displaystyle{ \frac{c}{2}} \, \displaystyle{  ||z - \bar{z}||^2} + \\ [0.2in]
\displaystyle{
	\frac{1}{2 \, \pi(A_{\,i} \,|\, X^{\,i})}
} \,  \, \bigg\{(1-\xi_1)\left[ \, \max(0,t_i) + \sigma_i^- \, \right]^2 + (1+\xi_2)\left[ \, \max(0,-t_i) + \sigma_i^- \, \right]^2 - \\[0.2in]

2(2-\xi_1+\xi_2)\bar{\sigma}^-_i(\sigma_i^- - \bar{\sigma}^-_i) -
2\left[\,(1-\xi_1)\max(0,\bar{t}_i) - (1+\xi_2)\max(0,-\bar{t}_i)\,\right]\,(t_i - \bar{t_i})\bigg\},
\end{array}
\]
where $z \triangleq ( w,\beta,\sigma^\pm )$ with $\beta \in \mathbb{R}^{p}$, $w \triangleq ( a,b ) \in S$,  $\sigma^- \in \mathbb{R}^{N}$ and $\sigma^+ \in \mathbb{R}^{|\calN_+|}$.
The above objective function involves the convex, non-differentiable terms $| a_i |$, $| \beta_i |$, $\left[ \, \max(0,t_i) + \sigma_i^- \, \right]^2$,
and $\left[ \, \max(0,-t_i) + \sigma_i^- \, \right]^2$; the latter two squared
terms also make the objective non-separable in the $w$ and $\sigma^-$ variables. All these features make the linear inequality constrained
subproblem seemingly complicated. One way to solve this subproblem is via the dual semismooth Newton approach, as discussed in a recent paper \cite{cui2018composite}.
In fact, by introducing auxiliary variables
\[\left\{\begin{array}{ll}
t_i^+ \, = \, \max(t_i, 0), & t_i^- \, = \, \max(-t_i, 0),\\[0.1in]
a_i^+ \, = \, \max(a_i, 0), & a_i^- \, = \, \max(-a_i, 0),\\[0.1in]
b_i^+ \, = \, \max(b_i, 0), & b_i^- \, = \, \max(-b_i, 0),
  \end{array}\right.
  \]
we may write
\[
{\cal Z}_i - w^{\, T} \wh{X}^{\, i} \, = \, t_i = t_i^+ - t_i^-, \epc |a_i| = a_i^+ + a_i^-, \epc |\beta_i| = \beta_i^+ + \beta_i^-. \]
Therefore,  an alternative approach for solving \eqref{eq:OCE optimization problem 3} is to  transform it into a
standard quadratic programming  problem with the additional variables $(t_i^+, t_i^-, a_i^+, a_i^-,b_i^+, b_i^-)$
such that  it can be solved by many efficient quadratic programming solvers.

{In terms of statistical consistency, as long as the tuning parameters $\lambda_a^{\, N}$ and $\lambda_\beta^{\, N}$ go to $0$ when $N$ goes to infinite, the minimizer of the empirical objective function \eqref{eq:OCE binaryoptimization problem} might converge to the minimizer of the corresponding population problem under some regularity conditions (\cite{van2000asymptotic}). If we allow rates of tunning parameters going to 0 faster than $\frac{1}{\sqrt{n}}$, then the convergence rate of empirical minimizers may be $\frac{1}{\sqrt{n}}$ under some regularity conditions. Similar ideas could be borrowed from \cite{knight2000asymptotics}, although their considered settings are different from ours. The convergence results in our settings are more complicated than those standard cases since the empirical loss function here is non-convex and non-smooth.
}

\section{Numerical Experiments}
\label{sec:Numerical}
In this section, we demonstrate the effectiveness of the proposed IDR-CDE in finding optimal IDRs via three synthetic examples.
The subproblem of the dc algorithm, being equivalent to a quadratic programming problem, is solved by the commercial solver Gurobi
with an academic license.  All the numerical results are run in Matlab on Mac OS X with 2.5 GHz Intel Core i7 and 16 GB RAM.
We use piecewise linear affine function given by \eqref{CVaR} with $\xi_1 = 0, \xi_2 = 0.5$ in all the experiments, which is
equivalent to estimating the optimal IDR that maximizes $\text{CVaR}_{0.5}(\calZ)$.  In practice, users can decide their own utility
functions and values $\xi_1$, $\xi_2$ based on the specific problem settings.  If one believes there may have high risks for inappropriate
decisions and wants to control the risk of higher-risk individuals, it would be better to use robust utility functions such as the
piecewise affine utility function.  We consider a binary-action space in a randomized study with $\pi(A_{\,i} = \pm 1 \,|\, X_i) = 0.5$.
All the tuning parameters such as $\lambda_b^N$ and $\lambda_{\beta}^N$ are selected via $10$-fold-cross-validation that maximizes
the following average of the empirical ${\cal O}_{(u, \calF)}^{\,d}(\calZ)$, which is defined as
\[
\widehat{\cal O}_{(u, \calF)}^{\, \wh{d}}(\calZ) \,\triangleq \, \displaystyle{
\frac{\displaystyle{
\sum_{i \in \calN}
} \, \left[ \, \wh{\alpha}(X_i) + u(\calZ_i - \wh{\alpha}(X_i)) \, \right] \, \displaystyle{
\frac{\II(A_i = \wh{d}(X_i))}{\pi(A_i | X_i)}}}{\displaystyle{
\sum_{i \in \calN}
} \, \displaystyle{
\frac{\II(A_i = \wh{d}(X_i))}{\pi(A_i | X_i)}
}}} \, .
\]
Specifically, we divide the training data into 10 groups.  For each fold, we estimate the optimal IDR $\wh{d}(X)$ using 9 groups
of the data (the training set) for a pre-specified series of tuning parameters $\lambda_b^N$ and $\lambda_{\beta}^N$ and then compute
$\widehat{\cal O}_{(u, \calF)}^{\, {d}}(\calZ)$ on the remaining group of data (the test set).  The best tuning parameters are the
ones that lead to the largest values of $\widehat{\cal O}_{(u, \calF)} ^{\, \wh{d}}(\calZ)$.  The so-obtained parameters are then
employed to re-compute the optimal IDR using the entire set of data.

We compare our approach with three existing methods under the expected-value function framework $\E^{\, d}[\calZ]$.
The first one is a model-based method called $l_1$-PLS \cite{qian2011performance} that first fits a penalized least-square regression
with covariate function $(1, X, A, X \circ A)$ on $\calZ$ to estimate $\E[\calZ | X, A = a]$, and then select the action with the largest
$\E\,[\,\calZ \,|\, X, A = a\,]$, where $X \circ A$ denotes the element-wise product.  The second one is a classification-based method
called residual weighted learning (RWL)  \cite{zhou2017residual} that consists of two steps: (1) fitting a least-square regression
on $\calZ_i$ with covariates $\wh{X}_i$ to compute the residual $r_i$ for each data point in order to remove the main effect;
(2) applying the support vector machine with truncated loss to compute the optimal IDR with each data point weighted by $r_i$.
The third one is the direct learning (DLearn) method \cite{qi2017} that lies between the model-based and the classification-based method,
where the optimal IDR is directly found by weighted penalized least square regression on $\calZ A$ with covariates $\wh{X}$, based on the
fact that \[\E\,[\,\calZ \,|\, X, A = 1\,] - \E\,[\,\calZ \,|\, X, A = -1\,] = \E\,\left[ \, \displaystyle{
\frac{\calZ\, A}{\pi(A | X)}
} \, | \, X \, \right].\]

The simulation data are generated by the  model
\begin{equation*}
\calZ = m(X) + h(X)A + \varepsilon,
\end{equation*}
where $m(X)$ is the main effect, $h(X)$ is the interaction effect with treatment $A$, and $\varepsilon$ is the random error.
We consider the same main effect and interaction effect functions: $m(X) =1+X_1+X_2$ and $h(X) = 0.5+X_1-X_2+X_3$ respectively,
but various types of {asymmetric} error distributions under three simulation scenarios:\\[0.05in]
(1) $\log(\varepsilon)$ follows a normal distribution with mean 0 and standard deviation 2;\\[0.05in]
(2) the random error $\varepsilon$ follows a Weibull distribution with scale parameter $0.5$ and shape parameter $0.3$;\\[0.05in]
(3) $\log(\varepsilon)$ follows a normal distribution with mean 0 and standard deviation $2|1+X_1+X_2|$.

 The above scenarios address heavy right tail distributions to test the robustness of different methods.
In particular, the log-normal distribution is frequently used in the finance area, the Weibull distribution is commonly considered
in survival analysis of clinical trials, and the  third scenario considers a heterogeneous error distribution depending on covariates. {In all our simulation studies, the error distributions are asymmetric}.

The training sample size is set to be 100 and 200, and the number of covariates $p$ is fixed to be $10$.  Each covariate is
generated by uniform distribution on $[-1,1]$.  In Table~\ref{tab:time}, we list the average computational time and the iteration
numbers of the dc algorithm for solving the problem  \eqref{eq:OCE optimization problem 3} with $\lambda_a^N=0.1$ and  $\lambda_{\beta}^N=0.1$
over 100 simulations. One can see that the proposed algorithm is very efficient and robust for solving the empirical IDR problem.

\begin{table}[h]
	\centering
	\small
	\scalebox{1}{
		\begin{tabular}{@{}lccccc@{}}
			\addlinespace
			\toprule
			& \multicolumn{2}{c}{$n=100$} & \phantom{a} & \multicolumn{2}{c}{$n=200$} \\
			\cmidrule{2-3} \cmidrule{5-6}
			& time & iteration numbers  &  &  time & iteration numbers \\
			\midrule
			Scenario 1 & 0.70 & 18 && 2.10 & 20 \\
			\midrule
			Scenario 2 & 0.79& 18 && 2.08 & 20 \\
			\midrule
			Scenario 3 & 0.68 & 16  && 1.88 & 18 \\
			\bottomrule
		\end{tabular}
	}
	\caption{\small The average computational times (in seconds) and dc iteration numbers for $p = 10$.}
	\label{tab:time}
\end{table}

The comparisons of the four methods for finding optimal IDRs over 100 replications are based on the following four criteria:\\[0.05in]
(1) the misclassification error rate on the test data (this is possible since the optimal IDR under our simulation settings is known,
which is $\sign(0.5 + X_1-X_2+X_3)$);\\[0.05in]
(2) the empirical average of outcome under the decision rule over test data, which is defined as
\[
\widehat{\E}^{\, d} \, \left[ \, \calZ \, \right] \, = \, \displaystyle{
\frac{\displaystyle{
\sum_{i \in \calN_1}
} \, \displaystyle{
\frac{\calZ_i\, \II(A_i = \wh{d}(X_i))}{\pi(A_i | X_i)}
}}{\displaystyle{
\sum_{i \in \calN_1}
} \, \displaystyle{
\frac{\II(A_i = \wh{d}(X_i))}{\pi(A_i | X_i)}
}}},
\]
where $\calN_1$ is the index of test data set. This value evaluates the expected outcome of $\calZ$ if the action assignment
follows the estimated decision rules $\widehat{d}(X)$;\\[0.05in]
(3) the empirical $50\%$  quantile of $\calZ_i\II(A_i = \wh{d}(X_i))$ on the test data;\\[0.05in]
(4) the empirical  $25\%$ quantiles of $\calZ_i\II(A_i = \wh{d}(X_i))$ on the test data.\\[0.1in]
The test data in each scenario are independently generated with size 10,000.

\begin{table}[h]
	\centering
	\small
	\scalebox{1}{
		\begin{tabular}{@{}lccccc@{}}
			\addlinespace
			\toprule
			& \multicolumn{2}{c}{$n=100$} & \phantom{a} & \multicolumn{2}{c}{$n=200$} \\
			\cmidrule{2-3} \cmidrule{5-6}
			& Misclass. & Value  &  & Misclass. & Value \\
			\midrule
			\multicolumn{6}{c}{Scenario 1} \\
			\midrule
			DLearn & 0.48(0.02) & 8.36(0.09) && 0.47(0.02) & 8.5(0.07)\\
			$l_1$-PLS & 0.45(0.01) & 8.46(0.06) && 0.45(0.01) & 8.58(0.09)\\
			RWL & 0.42(0.01) & 8.53(0.07) && 0.42(0.01) & 8.59(0.07)\\
			IDR-CDE & \bf{0.25}(0.01) & \bf{8.98}(0.07) && \bf{0.17}(0.01) & \bf{9.15}(0.08)\\
			\midrule
			\multicolumn{6}{c}{Scenario 2} \\
			\midrule
			DLearn & 0.44(0.02) & 5.82(0.06) && 0.44(0.02) & 5.74(0.06)\\
			$l_1$-PLS & 0.42(0.01) & 5.89(0.05) && 0.4(0.01) & 5.86(0.05)\\
			RWL & 0.39(0.01) & 5.95(0.04) && 0.37(0.01) & 5.96(0.04)\\
			IDR-CDE & \bf{0.21}(0.01) & \bf{6.36}(0.04) && \bf{0.15}(0.01) & \bf{6.41}(0.04)\\
			\midrule
			\multicolumn{6}{c}{Scenario 3} \\
			\midrule
			DLearn & 0.5(0.02) & 3948.04(659.88) && 0.51(0.02) & \bf{26588.55}(13692.58)\\
			$l_1$-PLS & 0.48(0.01) & \bf{4758.49}(801.06) && 0.5(0.01) & 26209.19(13702.62)\\
			RWL & 0.48(0.01) & 4256.27(774.97) && 0.47(0.01) & 24463.43(13592.7)\\
			IDR-CDE & \bf{0.24}(0.01) & 4113.85(934.74) && \bf{0.2}(0.01) & 25712.22(13473.72)\\
			\bottomrule
		\end{tabular}
	}
	\caption{\small Average misclassification rates (standard errors) and  average means (standard errors) of empirical value functions
for three simulation scenarios over 100 runs. The best expected value functions and the minimum misclassification rates are in bold.}
	\label{tab:p50nonlinear}
\end{table}
\begin{table}[h]
	\centering
	\small
	\scalebox{1}{
		\begin{tabular}{@{}lccccc@{}}
			\addlinespace
			\toprule
			& \multicolumn{2}{c}{$n=100$} & \phantom{a} & \multicolumn{2}{c}{$n=200$} \\
			\cmidrule{2-3} \cmidrule{5-6}
			& $50\%$ quantile & $25\%$ quantile  &  & $50\%$ quantile & $25\%$ quantile\\
			\midrule
			\multicolumn{6}{c}{Scenario 1} \\
			\midrule
			DLearn & 2.64(0.04) & 1.17(0.04) && 2.67(0.04) & 1.21(0.05)\\
			$l_1$-PLS & 2.73(0.03) & 1.26(0.03) && 2.74(0.03) & 1.25(0.03)\\
			RWL & 2.81(0.03) & 1.35(0.03) && 2.83(0.03) & 1.35(0.04)\\
			IDR-CDE & \bf{3.17}(0.01) & \bf{1.81}(0.02) && \bf{3.26}(0.01) & \bf{1.99}(0.01)\\
			\midrule
			\multicolumn{6}{c}{Scenario 2} \\
			\midrule
			DLearn & 1.96(0.04) & 0.69(0.04) && 1.97(0.04) & 0.7(0.05)\\
			$l_1$-PLS & 2.01(0.03) & 0.77(0.03) && 2.08(0.03) & 0.82(0.03)\\
			RWL & 2.1(0.03) & 0.85(0.03) && 2.16(0.03) & 0.92(0.03)\\
			IDR-CDE & \bf{2.47}(0.01) & \bf{1.36}(0.02) && \bf{2.53}(0.01) & \bf{1.47}(0.01)\\
			\midrule
			\multicolumn{6}{c}{Scenario 3} \\
			\midrule
			DLearn & 2.22(0.05) & 1.02(0.05) && 2.2(0.05) & 1.01(0.05)\\
			$l_1$-PLS & 2.3(0.03) & 1.04(0.03) && 2.24(0.03) & 0.99(0.03)\\
			RWL & 2.29(0.03) & 1.07(0.03) && 2.31(0.03) & 1.09(0.03)\\
			IDR-CDE & \bf{2.81}(0.01) & \bf{1.73}(0.01) && \bf{2.86}(0.01) & \bf{1.8}(0.02)\\
			\bottomrule
		\end{tabular}
		}
		\caption{\small Results of average $25\%$ (standard errors) and  $50\%$ (standard errors) quantiles of empirical value functions
for three simulation scenarios over 100 runs. The largest $25\%$ and $50\%$ quantiles are in bold.}
	\label{tab:p10quantiles}
\end{table}

Several observations can be drawn from these simulation examples in Tables \ref{tab:p50nonlinear} and \ref{tab:p10quantiles}.  First of all, our method under the IDR-CDE has the smallest classification
error in choosing correct decisions compared with those under the criterion of expected outcome.  Under the piecewise utility function, we
emphasize more on improving subjects with relative low outcome, in contrast to focusing on average, which may ignore the subjects with higher-risk.
As a result, in addition to misclassification rate, the $50\%$ and $25\%$ quantiles of expected-value functions are also the largest among all
the methods.  Secondly, the advantages of our method become more obvious if comparing the $25\%$ quantiles of the empirical value functions
on the test data with $50\%$ quantiles.  For example, in the second scenario, the $25\%$ quantiles of empirical value functions of our method
 are almost twice as large as those by DLearn.  Another interesting finding is that in the last scenario, although the average empirical
 value functions of $l_1$-PLS and RWL are larger than those of our method, our method is indeed much better based on the misclassification
 error and the quantiles.  One possible reason is that these methods under the expected value function framework only correctly identify
 the decisions for subjects in lower risk while ignoring subjects with potentially higher risk. The estimated optimal IDRs by those methods
 may lead to serious problems, especially in precision medicine when assigning treatments to patients.
 Although, on average, patients may gain benefits of following those decision rules, some patients may come across high risk,
 causing adverse events such as exacerbation in practice by using the recommended treatment using the standard criterion of expected outcome.

 {In terms of real data applications, there are several possibilities. For example, we can use the piecewise linear utility function 
 to control the lower tails of outcomes for individual patient in AIDS or cancer studies. Another potential application is to use the quadratic utility 
 function to take variance of each decision rules into consideration. The performance of the results by our method depends on the choice of the covariate-dependent $\alpha(X)$ and the utility function $u$. We leave these as the future work.}
\section{Acknowledgements}
\label{sec:Acknowledgements}
The authors thank two referees and the associate editor for their careful reading of the paper and for the comments that have helped to improve the quality of this paper.

\bibliographystyle{siamplain}
\bibliography{references}

\end{document}